\documentclass[11pt]{article}
\usepackage[T1]{fontenc}
\usepackage{latexsym,amssymb,amsmath,amsfonts,amsthm}
\usepackage{graphics}
\usepackage{graphicx}
\usepackage{mathrsfs}
\usepackage{subfigure}
\usepackage{color}
\usepackage{hyperref}
\hypersetup{
            colorlinks=true,
            linkcolor=blue,
            anchorcolor=blue,
            citecolor=blue}
\usepackage{algorithm,algorithmic}
\topmargin =0mm \headheight=0mm \headsep=0mm
\textheight =220mm \textwidth =160mm
\oddsidemargin=0mm\evensidemargin =0mm
\sloppy \brokenpenalty=10000


\newcommand{\R}{{\mathbb R}}
\newcommand{\Z}{{\mathbb Z}}
\newcommand{\N}{{\mathbb N}}
\newcommand{\C}{{\mathbb C}}
\newcommand{\Sp}{{\mathbb S}}

\newcommand{\no}{\nonumber}
\newcommand{\be}{\begin{eqnarray}}
\newcommand{\ben}{\begin{eqnarray*}}
\newcommand{\en}{\end{eqnarray}}
\newcommand{\enn}{\end{eqnarray*}}
\newcommand{\ba}{\backslash}
\newcommand{\pa}{\partial}

\newcommand{\ov}{\overline}

\newcommand{\Om}{\Omega}

\newcommand{\ra}{\rightarrow}

\newtheorem{theorem}{Theorem}[section]
\newtheorem{lemma}[theorem]{Lemma}

\newtheorem{remark}[theorem]{Remark}

\definecolor{xxx}{rgb}{1,0,0}

\definecolor{ccc}{rgb}{0,0.3,0.6}
\newcommand{\cy}{\color{ccc}}
\begin{document}
\renewcommand{\theequation}{\arabic{section}.\arabic{equation}}
\begin{titlepage}

\title{Asymptotic formulas for phase recovering from phaseless data of biharmonic waves at a fixed frequency}
\author{Yuxiang Cheng\thanks{School of Mathematics and Statistics, Xi'an Jiaotong University, Xi'an, Shaanxi, 710049, China ({\sf chengyx0403@stu.xjtu.edu.cn})}\and Xiaoxu Xu\thanks{Corresponding author: School of Mathematics and Statistics, Xi'an Jiaotong University, Xi'an, Shaanxi, 710049, China ({\sf xuxiaoxu@xjtu.edu.cn}).}}

\date{}
\end{titlepage}
\maketitle
\begin{abstract}
This paper focuses on phase retrieval from phaseless total-field data in biharmonic scattering problems. We prove that a phased biharmonic wave can be uniquely determined by the modulus of the total biharmonic wave within a nonempty domain. As a direct corollary, the uniqueness for the inverse biharmonic scattering problem with phaseless total-field data is established. Moreover, using the Atkinson-type asymptotic expansion, we derive explicit asymptotic formulas for the problem of phase retrieval.

\vspace{.2in} {\bf Keywords}: biharmonic wave equation, phase retrieval, inverse scattering problem, phaseless data, uniqueness.
\end{abstract}

\section{Introduction}

Biharmonic scattering problems play a key role in applications ranging from elastic cloaking \cite{2009Ultrabroadband,2012Experiments} and floating elastic structures \cite{MR2336799,2004Hydroelastic} to acoustic black hole technique \cite{2020The} and platonic grating stacks \cite{MR2545301}.
Most biharmonic scattering problems are typically formulated as two-dimensional models, while the multidimensional biharmonic operators are investigated in \cite{MR3810154}.

The well-posedness of obstacle scattering problems for biharmonic waves can be established by the variational method \cite{bi01} or the boundary integral equation method \cite{Dong1,wu2024}.
Furthermore, there are extensive studies for inverse biharmonic scattering problems.
\cite{p24} established two uniqueness results for the inverse problem by using both far-field patterns and phaseless near-field data at a fixed frequency.
By making use of reciprocity relations for the far-field pattern and the scattered
field, \cite{wu2024} proved that the biharmonic obstacle can be uniquely determined by the far-field measurements at two frequencies or the near-field measurements at a fixed frequency.
Recently, \cite{Zhu25} proposed a reverse time migration method for the inverse biharmonic scattering problems in both phased and phaseless cases, which extends the method for the inverse acoustic scattering problems in \cite{MR3084679}.
For the inverse source biharmonic scattering, \cite{bs23} showed that a general source can be decomposed into a radiating source and a non-radiating source and the radiating source can be uniquely determined by Dirichlet boundary measurements at a fixed wavenumber.
\cite{MR4793481} proposed a two-stage numerical method to identify the unknown source from the multifrequency phaseless data.
Moreover, direct and inverse biharmonic scattering problems by impenetrable obstacles were considered in \cite{MR4844606}.

In many practical applications, it is much harder to obtain data with accurate
phase information compared with just measuring the intensity (or the modulus) of
the data, and thus it is often desirable to study inverse scattering with phaseless data.
A number of significant related studies have been conducted in this field.
In \cite{IK2010}, the authors proposed a nonlinear integral equation method to reconstruct the obstacle shape from the modulus of the far-field pattern generated by a single incident plane wave.
\cite{Klibanov1} established the uniqueness result for the inverse scattering problem of determining a nonnegative, smooth, real-valued potential with a compact support from the phaseless near-field data corresponding to all incident point sources placed on a surface enclosing the compact support of the potential for all wave numbers in a finite interval.
This uniqueness result has been extended to the case of recovering the smooth wave speed in the 3D Helmholtz equation in \cite{Klibanov4}.
Reconstruction frameworks for inverse medium scattering problems with phaseless near-field data were introduced in \cite{KR16}.
\cite{Kli18} numerically solved the phaseless coefficient inverse problem, thereby reconstructing the locations and refractive indices of unknown scatterers from the intensity of the total complex-valued wave field.
\cite{Bao16} proposed a frequency-domain recursive linearization algorithm to recover the shape of multi-scale sound-soft large rough surfaces, using phaseless measurements of the scattered field generated by multi-frequency tapered waves.
\cite{MR3667603} employed superpositions of two plane waves as incident fields and then developed an iterative algorithm for reconstructing both the location and the shape of the unknown obstacle from phaseless far-field data.
Following the ideas in \cite{MR3667603}, rigorous uniqueness results were established in \cite{XZZ18} for inverse acoustic scattering problems with phaseless far-field data,
under certain a priori assumptions on the property of the scatterers.
By employing superpositions of a plane wave and a point source as incident fields and adding a reference ball into the scattering system,
\cite{MR3817294} proved that both the obstacle and its boundary condition can be uniquely determined from phaseless far-field data.
Moreover,  by the idea of using superpositions of different waves as incident fields,
\cite{MR4734389,MR4097662} established uniqueness results for inverse scattering problems with phaseless near-field data.
An iterative scheme based on a nonlinear integral equation \cite{Dong22} was proposed to reconstruct the location and shape of a sound-soft/hard obstacle from phaseless total-field data generated by a single incident point source.
In \cite{Ji191}, a hybrid method combining the phase retrieval algorithm and the direct sampling method was developed for problems involving phaseless far-field patterns.
This numerical approach was further extended to the phaseless inverse elastic scattering problem in \cite{MR4019532}, where a uniqueness result was established using phaseless far-field data and a reference ball.
\cite{MR4714559} also considered inverse
elastic obstacle scattering with phaseless data and established
a uniqueness result with phaseless near-field data corresponding to incident fields superposed by two point sources as well as a uniqueness result with phaseless far-field data corresponding to incident fields superposed by a point source and
a fixed plane wave.
A uniqueness result related to the phaseless inverse acoustic-elastic interaction problem was established in \cite{LDL}.
For the Maxwell system, we refer the reader to \cite{Romanov17,Ro18,Ro20,Ro21}.
Further, details on the reverse time migration technique and the recovery scheme based on Kirchhoff approximation for phaseless inverse electromagnetic scattering problems can be found in \cite{MR3541997} and \cite{Liu17}, respectively.
In \cite{novikov2025,N15,NV20,NV22}, explicit formulas for phase retrieval are derived via the Atkinson-type expansion of radiating solutions (see, e.g., \cite[Section 1.7]{MR1350074}).
This idea has been extended in \cite{MR3902451} to the phase retrieval in inverse scattering by locally rough surfaces.

In this paper, we consider the phase retrieval from phaseless total-field data in biharmonic scattering problems. The main contribution of our work consists of the following two aspects.
Firstly, it is proved that the phased biharmonic wave (both the Helmholtz and modified Helmholtz wave components) can be uniquely determined by the modulus of the total field within a nonempty domain.
As a direct corollary, the uniqueness for the inverse biharmonic scattering problem is established from phaseless total-field data. It is worth noting that the uniqueness result differs from that reported in \cite{p24,wu2024}, we derive the uniqueness result for the case of an unknown boundary condition, and further prove that the boundary condition can be uniquely determined if it belongs to any of the six boundary conditions specified in Section \ref{s2}.
Secondly, motivated by \cite{novikov2025,N15,NV20,NV22}, explicit multipoint formulas for the phased far-field patterns of both the Helmholtz and modified Helmholtz wave components are derived from the modulus of the total field at some points.
We should mention that our phase retrieval formulas for the Helmholtz part of the biharmonic wave are a trivial extension of the formulas for the case of acoustic scattering presented in \cite{novikov2025,N15,NV20,NV22}. However, the phase retrieval formulas for the modified Helmholtz part of the biharmonic wave are nontrivial and are newly derived in this paper since its phase retrieval is influenced by the Helmholtz part.
Besides, the multipoint formula for the phased far-field pattern from phaseless total-field data in two dimensions refines the existing results in \cite{NV22}. Specifically, a formula was proposed in \cite[Section 3.4]{NV22} for acoustic waves that yields the approximate phased far-field pattern using phaseless total-field data at $3n$ points. However, we propose a formula that employs phaseless total-field data at only $2n$ points to obtain the approximate phased far-field pattern for biharmonic waves, at the same error accuracy as the formula in \cite[Section 3.4]{NV22} (see Theorem \ref{4.10} below).

The rest of this paper is organized as follows. Preliminary results on the forward scattering of biharmonic waves are presented in Section \ref{s2}. In Section \ref{s3}, we focus on the uniqueness of phase retrieval. Explicit formulas for phase retrieval are derived in Section \ref{s4}. Finally, conclusions are given in Section \ref{s5}.

\section{Problem Formulation and Preliminaries}\label{s2}
\setcounter{equation}{0}

The biharmonic obstacle scattering problem is modeled by
\be\label{1}
\Delta^2 u-k^4u=0 && \text{in }\R^m\ba\ov{D},\\\label{2}
\mathscr{B}u=(0,0) && \text{on }\pa D,\\\label{3}
\lim_{r\ra\infty}r^{(m-1)/2}\left(\frac{\pa u^s}{\pa r}-iku^s\right)=0, && r=|x|,
\en
where the bounded domain $D\subset\R^m,m={2,3}$, is the open complement of an unbounded domain, $\pa D$ is in class $C^{3,\alpha}$, $k>0$ is the wave number, the total field $u$ is the sum of the incident field $u^i$ and the scattered field $u^s$.
The incident field is given by plane wave $u^i=u^i(x,d)=e^{ikx\cdot d}$, where $d\in\Sp^{m-1}$ is the incident direction, $\Sp^{m-1}$ is the unit circle for $m=2$ and the unit sphere for $m=3$.
\eqref{1} is the time-harmonic biharmonic wave equation.
The scattered field $u^s$ is assumed to satisfy the Sommerfeld radiation condition \eqref{3}.
There are various boundary conditions for the biharmonic scattering \cite{bi01,bibc2,bibc3}, in this paper, we mainly consider the Dirichlet condition $\mathscr{B}u=(u,\pa_\nu u)$ with $\nu$ being the unit exterior normal on $\pa D$, the Navier condition $\mathscr{B}u=(u,\Delta u)$, the Neumann condition $\mathscr{B}u=(\Delta u,\pa_\nu\Delta u)$ (this is actually a special case of the Neumann condition) and $\mathscr{B}u=(u,\pa_\nu\Delta u),(\pa_\nu u,\Delta u),(\pa_\nu u,\pa_\nu\Delta u)$.
For the well-posedness of the biharmonic scattering problem with these boundary conditions, we refer the reader to Remark \ref{2.7} below.
Following \cite{bi01,Dong1}, we define
\be\label{HM}
u_H:=\frac12\left(u^s-\frac{\Delta u^s}{k^2}\right)\qquad u_M:=\frac12\left(u^s+\frac{\Delta u^s}{k^2}\right),
\en
and thus \eqref{1}--\eqref{3} can be reformulated as
\be\label{1'}
\Delta u_H+k^2u_H=0 && \text{in }\R^m\ba\ov{D},\\ \label{1''}
\Delta u_M-k^2u_M=0 && \text{in }\R^m\ba\ov{D},\\ \label{2'}
\mathscr{B}u_H+\mathscr{B}u_M=-\mathscr{B}u^i && \text{on }\pa D,\\\label{3'}
\lim_{r\ra\infty}r^{(m-1)/2}\left(\frac{\pa u_H}{\pa r}-ik u_H\right)=0, && r=|x|,\\ \label{3''}
\lim_{r\ra\infty}r^{(m-1)/2}\left(\frac{\pa u_M}{\pa r}-ik u_M\right)=0, && r=|x|,
\en
where $u_H$ solves the Helmholtz equation \eqref{1'} and $u_M$ solves the modified Helmholtz equation \eqref{1''},
{\cy both} $u_H$ and $u_M$ satisfy the Sommerfeld radiation condition (see \eqref{3'} and \eqref{3''}).
Consequently, the scattered field to \eqref{1}--\eqref{3} is given by $u^s=u_H+u_M$ in $\R^m\ba\ov{D}$, where $u_H$ is called the Helmholtz part of the scattered field and $u_M$ the modified Helmholtz part.

We present several results on the forward biharmonic scattering problem for later use.
For convenience, we introduce the following single- and double-layer potentials:
\ben
(SL_{\kappa,\pa\Om}\varphi)(x)=\int_{\pa\Om}\Phi_\kappa(x,y)\varphi(y)ds(y),\quad x\in\R^m\ba\ov{\Om},\\
(KL_{\kappa,\pa\Om}\varphi)(x)=\int_{\pa\Om}\frac{\pa\Phi_\kappa(x,y)}{\pa \nu(y)}\varphi(y)ds(y),\quad x\in\R^m\ba\ov{\Om},
\enn
the following single- and double-layer operators:
\ben
(S_{\kappa,\pa\Om}\varphi)(x)=\int_{\pa\Om}\Phi_\kappa(x,y)\varphi(y)ds(y),\quad x\in\pa\Om,\\
(K_{\kappa,\pa\Om}\varphi)(x)=\int_{\pa\Om}\frac{\pa\Phi_\kappa(x,y)}{\pa \nu(y)}\varphi(y)ds(y),\quad x\in\pa\Om,
\enn
and corresponding far-field operators:
\ben
(S_{\kappa,\pa\Om}^{\infty}\varphi)(\hat x)=\gamma_m(\kappa)\int_{\pa\Om}e^{-i\kappa\hat x\cdot y}\varphi(y)ds(y),\quad\hat x\in\Sp^{m-1},\\
(K_{\kappa,\pa\Om}^{\infty}\varphi)(\hat x)=\gamma_m(\kappa)\int_{\pa\Om}\frac{\pa e^{-i\kappa\hat x\cdot y}}{\pa \nu(y)}\varphi(y)ds(y),\quad\hat x\in\Sp^{m-1},
\enn
where $\Om\subset\R^m$ is a bounded domain with boundary $\pa\Om$, the constant $\gamma_m(\kappa)$ is given in Lemma \ref{2.3}, and $\Phi_\kappa(x,y)$ is the fundamental solution of Helmholtz equation with wavenumber $\kappa$, i.e., $\Delta_x\Phi_\kappa(x,y)+\kappa^2\Phi_\kappa(x,y)=-\delta(x-y)$.
In particular, $\Phi_0(x,y)$ denotes the fundamental solution of Laplace equation $\Delta_x\Phi_0(x,y)=-\delta(x-y)$.
In the sequel, we will set $\kappa=k$ or $\kappa=ik$.

\begin{remark}\label{2.7}
The well-posedness of \eqref{1}--\eqref{3} with the boundary condition $\mathscr{B}u=(u,\pa_\nu u)$ or $\mathscr{B}u=(\Delta u,\pa_\nu\Delta u)$ has been established in \cite[Section 4]{wu2024}, and the well-posedness of \eqref{1}--\eqref{3} with boundary conditions $\mathscr{B}u=(u,\Delta u)$ and $\mathscr{B}u=(\pa_\nu u,\pa_\nu\Delta u)$ can be established analogously since the boundary integral equations for $u_H$ and $u_M$ in \eqref{1'}--\eqref{3''} are decoupled.
Now we consider boundary conditions $\mathscr{B}u=(u,\pa_{\nu}\Delta u)$ and $\mathscr{B}u=(\pa_{\nu}u,\Delta u)$.
By seeking the solutions $u_H$ and $u_M$ in the form of
\ben
u_H=SL_{k,\pa D}\varphi+KL_{k,\pa D}\psi,\ u_M=SL_{ik,\pa D}\varphi-KL_{ik,\pa D}\psi+i\eta SL_{ik,\pa D}(S_{0,\pa D}^2\psi)\text{\ in\ }\R^m\ba\ov{D},
\enn
and
\ben
u_H=-SL_{k,\pa D}\varphi-KL_{k,\pa D}\psi,\ u_M=SL_{ik,\pa D}\varphi-KL_{ik,\pa D}\psi+i\eta SL_{ik,\pa D}(S_{0,\pa D}^2\psi)\text{\ in\ }\R^m\ba\ov{D},
\enn
with density $(\varphi,\psi)\in H^{-1/2}(\pa D)\times H^{1/2}(\pa D)$ and the real parameter $\eta\neq 0$, the well-posedness of \eqref{1}--\eqref{3} with boundary conditions $\mathscr{B}u=(u,\pa_{\nu}\Delta u)$ and $\mathscr{B}u=(\pa_{\nu}u,\Delta u)$, respectively, can also be established analogously.
\end{remark}
Analogous to \cite[Theorem 2.5]{CK19}, we have the following Green's formulas for $u_H$ and $u_M$, respectively.
\begin{lemma}\label{2.1}
Let $(u_H,u_M)$ solve \eqref{1'}--\eqref{3''}.
Assume that both $u_H$ and $u_M$ possess normal derivatives on the boundary.
For $x\in\R^m\ba\ov{D}$ we have
\ben
u_H(x)=\int_{\pa D}\left\{u_H(y)\frac{\pa\Phi_{k}(x,y)}{\pa\nu(y)}-\frac{\pa u_H}{\pa\nu}(y)\Phi_{k}(x,y)\right\}ds(y),\\
u_M(x)=\int_{\pa D}\left\{u_M(y)\frac{\pa\Phi_{ik}(x,y)}{\pa\nu(y)}-\frac{\pa u_M}{\pa\nu}(y)\Phi_{ik}(x,y)\right\}ds(y).
\enn
\end{lemma}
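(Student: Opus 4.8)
The plan is to derive the two representation formulas independently, each by a standard Green's-representation-theorem argument applied in the exterior domain $\R^m\ba\ov{D}$, combining the classical Green's second identity with the behavior at infinity encoded by the Sommerfeld radiation conditions \eqref{3'} and \eqref{3''}. For $u_H$, which satisfies the Helmholtz equation $\Delta u_H+k^2u_H=0$, this is verbatim the Green's representation formula for radiating solutions of the Helmholtz equation outside an obstacle (cf.\ \cite[Theorem 2.5]{CK19}); for $u_M$, which satisfies the modified Helmholtz equation $\Delta u_M-k^2u_M=0$, the same scheme applies with the fundamental solution $\Phi_{ik}(x,y)$ of the modified Helmholtz operator in place of $\Phi_k(x,y)$, and the decay of $\Phi_{ik}$ at infinity makes the boundary-at-infinity term vanish even more readily.

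First I would fix $x\in\R^m\ba\ov{D}$ and, for small $\vep>0$ and large $R$, work in the bounded region $\Om_{R,\vep}:=\{y\in\R^m\ba\ov{D}: |y|<R\}\ba\ov{B_\vep(x)}$, whose boundary consists of $\pa D$, the large sphere $S_R$, and the small sphere $S_\vep(x)$ around the singularity. Applying Green's second identity to the pair $(u_H,\Phi_k(\cdot,y))$ on $\Om_{R,\vep}$ — legitimate since both are $C^2$ there and $\Delta u_H+k^2u_H=0=\Delta_y\Phi_k(x,y)+k^2\Phi_k(x,y)$ away from $y=x$ — yields a sum of three boundary integrals equal to zero. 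Next I would let $\vep\to0$: the integral over $S_\vep(x)$ converges to $-u_H(x)$ by the standard local analysis of the fundamental solution's singularity (this uses that $u_H$ and $\nabla u_H$ are continuous near $x$). Then I would let $R\to\infty$: here the integral over $S_R$ vanishes because both $u_H$ and $\Phi_k(x,\cdot)$ satisfy the Sommerfeld radiation condition, so the combination $u_H\,\pa_\nu\Phi_k-\pa_\nu u_H\,\Phi_k$ is $o(R^{-(m-1)})$ uniformly on $S_R$; this is exactly the argument behind \cite[Theorem 2.5]{CK19} and requires knowing that $u_H$ is a genuine radiating solution, which is hypothesis \eqref{3'}. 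What remains is the integral over $\pa D$, which gives the claimed formula for $u_H(x)$; the assumption that $u_H$ possesses a normal derivative on $\pa D$ is what makes the boundary integrand well-defined.

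For $u_M$ I would repeat the identical three-step argument with $\Phi_{ik}(x,y)$, using $\Delta_y\Phi_{ik}(x,y)-k^2\Phi_{ik}(x,y)=-\delta(x-y)$ so that Green's identity again produces only boundary terms; the $S_\vep(x)$ term again tends to $-u_M(x)$ since $\Phi_{ik}$ has the same $|x-y|^{2-m}$ (or $\log$ in $2$D) leading singularity as $\Phi_k$. The $S_R$ term is even easier: $\Phi_{ik}(x,y)$ and its gradient decay exponentially (in $3$D) or like $e^{-k|y|}$-type modified Bessel asymptotics in $2$D as $|y|\to\infty$, so no radiation condition on $u_M$ is strictly needed to kill it — although \eqref{3''} is available and also suffices. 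The main obstacle, and the only place care is required, is the justification that the exterior-domain Green's identity may be applied at all, i.e.\ the interchange of limits and the verification that the $S_R$-term for the $u_H$ case truly vanishes; this is where the radiation condition \eqref{3'} and the $C^{3,\alpha}$ regularity of $\pa D$ (guaranteeing enough smoothness of $u_H,u_M$ up to $\pa D$ for the normal derivatives to exist, as assumed) are essential. Once these standard analytic points are in place, the two formulas follow immediately, so I would keep the write-up brief and refer to \cite[Theorem 2.5]{CK19} for the details of the Helmholtz part.
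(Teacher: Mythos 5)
Your proposal is correct and follows essentially the same route as the paper, which gives no detailed proof and simply invokes the standard Green representation theorem of \cite[Theorem 2.5]{CK19} for the Helmholtz part together with its obvious analogue for the modified Helmholtz part. The only small imprecision is your remark that no condition on $u_M$ at infinity is ``strictly needed'' to kill the $S_R$-term: some control on the growth of $u_M$ is required there (an exponentially growing solution of $\Delta u_M-k^2u_M=0$ would make $u_M\,\Phi_{ik}$ of order one on $S_R$), and it is precisely the radiation condition \eqref{3''} --- which forces $u_M$ to decay like $e^{-k|y|}|y|^{-(m-1)/2}$ --- that supplies it, as you acknowledge in passing.
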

In view of Lemma \ref{2.1} and \cite[Theorem 2.6]{CK19}, we immediately have the following lemma.
\begin{lemma}\label{2.3}
The solution $(u_H,u_M)$ to \eqref{1'}--\eqref{3''} has the following asymptotic behavior
\be\label{2.3-1} &&u_H(x)=\frac{e^{ik|x|}}{|x|^{(m-1)/2}}\left\{u_H^{\infty}(\hat{x})+O\left(\frac{1}{|x|}\right)\right\},\quad |x|\ra\infty,\\ \label{2.3-2}
&&u_M(x)=\frac{e^{-k|x|}}{|x|^{(m-1)/2}}\left\{u_M^{\infty}(\hat{x})+O\left(\frac{1}{|x|}\right)\right\},\quad |x|\ra\infty,
\en
uniformly in all directions $\hat x=x/|x|$ where the functions $u_H^{\infty}$ and $u_M^{\infty}$ defined on $\Sp^{m-1}$ are known as the far field pattern of $u_H$ and $u_M$, respectively.
Moreover, we have
\be\label{2.3-3}
&&u_H^{\infty}(\hat{x})=\gamma_m(k)\int_{\pa D}\left\{u_H(y)\frac{\pa e^{-ik\hat{x}\cdot y}}{\pa\nu(y)}-\frac{\pa u_H}{\pa\nu}(y)e^{-ik\hat{x}\cdot y}\right\}ds(y),\quad\hat x\in\Sp^{m-1},\\ \label{2.3-4}
&&u_M^{\infty}(\hat{x})=\gamma_m(ik)\int_{\pa D}\left\{u_M(y)\frac{\pa e^{k\hat{x}\cdot y}}{\pa\nu(y)}-\frac{\pa u_M}{\pa\nu}(y)e^{k\hat{x}\cdot y}\right\}ds(y),\quad\hat x\in\Sp^{m-1},
\en
where $\gamma_m(k)=\frac{e^{i\pi/4}}{\sqrt{8k\pi}}$, $\gamma_m(ik)=\frac{1}{\sqrt{8k\pi}}$ for $m=2$ and $\gamma_m(\kappa)=\frac{1}{4\pi}$ for $m=3$.
\end{lemma}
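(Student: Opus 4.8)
The plan is to obtain both expansions directly from the Green's representation formulas of Lemma \ref{2.1}, by inserting the large-argument asymptotics of the fundamental solutions $\Phi_k(x,y)$, $\Phi_{ik}(x,y)$ and of their normal derivatives in $y$, and then interchanging the (uniform) limit $|x|\to\infty$ with the integration over the compact set $\partial D$. Throughout one uses $|x-y|=|x|-\hat x\cdot y+O(1/|x|)$ as $|x|\to\infty$, uniformly for $y\in\partial D$.

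For the Helmholtz component $u_H$, recall $\Phi_k(x,y)=\tfrac{i}{4}H_0^{(1)}(k|x-y|)$ for $m=2$ and $\Phi_k(x,y)=\tfrac{e^{ik|x-y|}}{4\pi|x-y|}$ for $m=3$. Using the asymptotics of $H_0^{(1)}$ (respectively of the exponential kernel) one gets, uniformly in $\hat x\in\Sp^{m-1}$ and $y\in\partial D$,
\[
\Phi_k(x,y)=\frac{e^{ik|x|}}{|x|^{(m-1)/2}}\left\{\gamma_m(k)\,e^{-ik\hat x\cdot y}+O\!\left(\tfrac{1}{|x|}\right)\right\},
\]
and an analogous expansion for $\partial\Phi_k(x,y)/\partial\nu(y)$ with $e^{-ik\hat x\cdot y}$ replaced by $\partial e^{-ik\hat x\cdot y}/\partial\nu(y)$ (differentiating the amplitude contributes only to the lower-order remainder). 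This is exactly \cite[Theorem 2.6]{CK19}. Substituting into the first identity of Lemma \ref{2.1} and factoring out $e^{ik|x|}/|x|^{(m-1)/2}$ yields \eqref{2.3-1} and the formula \eqref{2.3-3}.

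For the modified Helmholtz component $u_M$ the reasoning is identical, but now $\Phi_{ik}(x,y)=\tfrac{1}{2\pi}K_0(k|x-y|)$ for $m=2$ and $\Phi_{ik}(x,y)=\tfrac{e^{-k|x-y|}}{4\pi|x-y|}$ for $m=3$; the key input is the large-argument behavior of the modified Bessel function, $K_0(t)=\sqrt{\pi/(2t)}\,e^{-t}\bigl(1+O(1/t)\bigr)$ and $K_0'(t)=-\sqrt{\pi/(2t)}\,e^{-t}\bigl(1+O(1/t)\bigr)$ as $t\to\infty$. Together with $e^{-k|x-y|}=e^{-k|x|}e^{k\hat x\cdot y}\bigl(1+O(1/|x|)\bigr)$ this gives, uniformly in $\hat x$ and $y\in\partial D$,
\[
\Phi_{ik}(x,y)=\frac{e^{-k|x|}}{|x|^{(m-1)/2}}\left\{\gamma_m(ik)\,e^{k\hat x\cdot y}+O\!\left(\tfrac{1}{|x|}\right)\right\},
\]
and the companion expansion for the normal derivative; plugging these into the second identity of Lemma \ref{2.1} produces \eqref{2.3-2} and \eqref{2.3-4}. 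Note that $u_M$ decays exponentially, so the Sommerfeld condition \eqref{3''} holds automatically.

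The one point requiring genuine attention, since it is not covered by \cite{CK19}, is the modified Helmholtz kernel: one must check that the amplitude factor of $K_0$ enters only the $O(1/|x|)$ remainder and that the resulting constant is indeed $\gamma_m(ik)=\tfrac{1}{\sqrt{8k\pi}}$ for $m=2$ (and $\tfrac{1}{4\pi}$ for $m=3$), which follows from $\tfrac{1}{2\pi}\sqrt{\pi/2}=\tfrac{1}{\sqrt{8\pi}}$ once the factor $(k|x|)^{-1/2}$ is split as $k^{-1/2}|x|^{-1/2}$. I do not anticipate a real difficulty here; the only care needed is to keep track of the uniformity of the remainder terms in both $\hat x$ and $y$, so that the limit may be taken inside the boundary integrals.
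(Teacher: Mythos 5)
Your proposal is correct and follows exactly the route the paper intends: the paper derives Lemma \ref{2.3} "in view of Lemma \ref{2.1} and \cite[Theorem 2.6]{CK19}", i.e., by inserting the large-argument asymptotics of $\Phi_k$, $\Phi_{ik}$ and their normal derivatives into the Green's representation formulas, which is precisely what you do (including the correct verification that $\tfrac{1}{2\pi}K_0$ produces the constant $\gamma_2(ik)=\tfrac{1}{\sqrt{8k\pi}}$ for the modified Helmholtz kernel). No gaps.
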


The analyticity of $u^\infty_H(\hat x)$ and $u^\infty_M(\hat x)$ in $\hat x\in\Sp^{m-1}$ follows from \eqref{2.3-3} and \eqref{2.3-4}.

\begin{lemma}\label{lem260105}
Let $(u_H,u_M)$ solve \eqref{1'}--\eqref{3''}. Assume $D$ is contained in $|x|\leq R$.
For $m=2$ and $|x|>R$ we have
\ben
u_H(x)=\sum_{n= -\infty}^{\infty}\alpha_{H,n} H_n^{(1)}\left(k|x|\right)e^{in\theta},\quad u^\infty_H(\hat x)=\sqrt{\frac{2}{k\pi}}\sum_{n= -\infty}^{\infty}\alpha_{H,n}e^{-i(\frac{n\pi}2+\frac\pi4)}e^{in\theta},\\
u_M(x)=\sum_{n= -\infty}^{\infty}\alpha_{M,n} H_n^{(1)}\left(i k|x|\right)e^{in\theta},\quad u^\infty_M(\hat x)=\sqrt{\frac{2}{k\pi}}\sum_{n= -\infty}^{\infty}\alpha_{M,n}e^{-i(\frac{n\pi}2+\frac\pi2)}e^{in\theta},
\enn
where $H_n^{(1)}$ is the Hankel function of the first kind of order $n$ and $\hat{x}=x/|x|=(\cos\theta,\sin\theta)$.
For $m=3$ and $|x|>R$ we have
\ben
u_H(x)=\sum_{n=0}^{\infty}\sum_{l=-n}^{n}\alpha_{H,n,l} h_n^{(1)}(k|x|)Y_n^{l}\left(\hat x\right),\quad u_H^\infty(\hat x)=\frac1k\sum_{n=0}^{\infty}\sum_{l=-n}^{n}\alpha_{H,n,l}e^{-i(\frac{n\pi}2+\frac\pi2)}Y_n^{l}\left(\hat x\right),\\
u_M(x)=\sum_{n=0}^{\infty}\sum_{l=-n}^{n}\alpha_{M,n,l} h_n^{(1)}(i k|x|)Y_n^{l}\left(\hat x\right),\quad u_M^\infty(\hat x)=\frac1{ik}\sum_{n=0}^{\infty}\sum_{l=-n}^{n}\alpha_{M,n,l}e^{-i(\frac{n\pi}2+\frac\pi2)}Y_n^{l}\left(\hat x\right),
\enn
where $\hat x=x/|x|$, $h_n^{(1)}$ is the spherical Hankel function of the first kind of order $n$ and $\{Y_n^{l}:l=-n,-n+1,...,n-1,n\text{ and }n\in\Z\}$ are the spherical harmonics (see \cite[Theorem 2.8]{CK19}).
The expansions of $u_H$ and $u_M$ converge uniformly and absolutely on compact subsets of $|x|>R$, and the expansions of $u^\infty_H$ and $u^\infty_M$ converge uniformly.
\end{lemma}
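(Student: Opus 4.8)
The plan is to reduce everything to separation of variables in polar (for $m=2$) and spherical (for $m=3$) coordinates, treating the modified Helmholtz component $u_M$ as a Helmholtz problem with the imaginary wave number $ik$. For the Helmholtz component $u_H$, the asserted expansion is the classical one for radiating solutions of the Helmholtz equation in the exterior of a ball (for $m=3$ this is the spherical-harmonics version quoted in the statement as \cite[Theorem 2.8]{CK19}; the $m=2$ case is entirely analogous, cf.\ \cite{CK19}), so I would simply invoke it: on each sphere $|x|=r>R$ expand $u_H$ into a Fourier series $\sum_n a_{H,n}(r)e^{in\theta}$ (resp.\ a spherical-harmonic series), note that inserting this into \eqref{1'} forces each coefficient to solve Bessel's equation (resp.\ the spherical Bessel equation) in the variable $kr$, and use the Sommerfeld radiation condition \eqref{3'} to discard the Hankel functions of the second kind, leaving $a_{H,n}(r)=\alpha_{H,n}H_n^{(1)}(kr)$ (resp.\ $a_{H,n,l}(r)=\alpha_{H,n,l}h_n^{(1)}(kr)$).

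To obtain the far field, I would substitute the large-argument asymptotics $H_n^{(1)}(t)=\sqrt{2/(\pi t)}\,e^{i(t-n\pi/2-\pi/4)}(1+O(1/t))$ and $h_n^{(1)}(t)=t^{-1}e^{i(t-(n+1)\pi/2)}(1+O(1/t))$ with $t=k|x|$ into the series, compare the leading term with \eqref{2.3-1}, and read off $u_H^{\infty}$; since $(-i)^{n+1}=e^{-i(n\pi/2+\pi/2)}$ and $e^{-i(n\pi/2+\pi/4)}$ are exactly the phases appearing in the statement, this yields the claimed formulas for $u_H^{\infty}$ in both dimensions. Here one uses that convergence of the series for $|x|>R$ together with Cauchy-type estimates and the large-order behaviour of Hankel functions forces $|\alpha_{H,n}|$ (resp.\ $|\alpha_{H,n,l}|$) to decay faster than any geometric rate, which legitimizes passing to the limit $|x|\to\infty$ term by term, uniformly in $\hat x$.

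For $u_M$, I would run the same argument: writing $u_M(r,\theta)=\sum_n a_{M,n}(r)e^{in\theta}$ in $\R^2$, equation \eqref{1''} gives $a_{M,n}''+r^{-1}a_{M,n}'-(k^2+n^2/r^2)a_{M,n}=0$, i.e.\ Bessel's equation in the variable $ikr$, so $a_{M,n}(r)=\alpha_{M,n}H_n^{(1)}(ikr)+\beta_{M,n}H_n^{(2)}(ikr)$ since these two functions form a fundamental system. As $r\to\infty$, $H_n^{(1)}(ikr)$ decays like $e^{-kr}$ while $H_n^{(2)}(ikr)$ grows like $e^{kr}$; applying the radiation condition \eqref{3''} to the $n$-th coefficient (extracted by orthogonality of $e^{in\theta}$) — or, alternatively, using the exponential decay of $u_M$ from Lemma \ref{2.3} — forces $\beta_{M,n}=0$ for every $n$. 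The case $m=3$ is identical with $e^{in\theta}$ replaced by $Y_n^l$ and $H_n^{(1)}$ by $h_n^{(1)}$. Finally, for the far field I substitute $t=ik|x|=e^{i\pi/2}k|x|$ into the (analytically continued) asymptotics above: this produces the decaying exponential $e^{i\cdot ik|x|}=e^{-k|x|}$, an extra phase $e^{-i\pi/4}$ in $\R^2$ that turns $e^{-i(n\pi/2+\pi/4)}$ into $e^{-i(n\pi/2+\pi/2)}$, and the factor $1/(ik)$ in $\R^3$; comparing with \eqref{2.3-2} gives precisely the stated series for $u_M^{\infty}$.

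I expect the only genuinely technical points to be the justifications of the term-by-term manipulations, namely: (i) that the Fourier (resp.\ spherical-harmonic) coefficients may be differentiated under the integral sign and hence solve the (modified) Bessel ODE — this rests on the interior regularity (indeed real-analyticity) of solutions of the elliptic equations \eqref{1'} and \eqref{1''} in $\R^m\ba\ov D$; (ii) the super-geometric decay of the coefficients $\alpha_{H,n},\alpha_{M,n}$ (and their three-dimensional analogues), needed to interchange $\sum_n$ with the limit $|x|\to\infty$ uniformly in $\hat x$; and (iii) the step discarding $H_n^{(2)}(ik\cdot)$, where one must be careful to use either \eqref{3''} evaluated mode-by-mode or the decay in Lemma \ref{2.3}, since an exponentially growing mode is of course incompatible with a radiating solution but this needs to be stated cleanly. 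Everything else is routine bookkeeping with Bessel-function asymptotics.
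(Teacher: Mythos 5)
Your proposal is correct and follows essentially the same route as the paper: invoke the classical exterior expansion for the Helmholtz part, run separation of variables for $u_M$ viewed as a Helmholtz problem with wavenumber $ik$, discard the $H_n^{(2)}(ik\,\cdot)$ (resp.\ $h_n^{(2)}(ik\,\cdot)$) modes via their exponential growth versus the radiation condition, and read off the far fields from the large-argument Hankel asymptotics. Your phase bookkeeping (the extra $e^{-i\pi/4}$ in 2D and the $1/(ik)$ in 3D) matches the stated formulas, and the technical points you flag are simply left implicit in the paper's shorter proof.
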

\begin{proof}
We only give the proof for $m=3$ since the case of $m=2$ is similar.
The series of $u_H$ for $m=3$ follows directly from \cite[Theorem 2.15]{CK19}.
Since $u_M$ solves the modified Helmholtz equation, proceeding as in the proof of \cite[Lemma 2.12]{CK19} we have
\ben
\ u_M(x)=\sum_{n=0}^{\infty}\sum_{l=-n}^{n}\left\{\alpha_{M,n,l} h_n^{(1)}(i k|x|)Y_n^{l}\left(\hat x\right)+\beta_{M,n,l}h_n^{(2)}(i k|x|)Y_n^{l}\left(\hat x\right)\right\}.
\enn
It follows from the asymptotic behavior \cite[(2.42)]{CK19}:
\be\label{260105}
\ h_n^{(1,2)}(t):=\frac{1}{t}e^{\pm i(t-\frac{n\pi}{2}-\frac{\pi}{2})}\left\{1+O\left(\frac{1}{t}\right)\right\},\ t\ra\infty
\en
that $h_n^{(1)}(ik|x|)$ decays exponentially and $h_n^{(2)}(ik|x|)$ increases exponentially as $|x|\ra\infty$.
Noting that $u_M$ satisfies the Sommerfeld radiation condition, we have $\beta_{M,n,l}=0$ for all $n$ and $l$.
This shows the expansion of $u_M$.
Using again \eqref{260105}, we obtain the expansion of $u_M^\infty$.
The absolute and uniform convergence of $u_H$ and $u_M$ on compact subsets of $|x|>R$ and the uniform convergence of $u^\infty_H$ and $u^\infty_M$ are analogous to \cite[Theorem 2.15]{CK19} and \cite[Theorem 2.16]{CK19}, respectively.
\end{proof}

With the help of Lemma \ref{lem260105}, proceeding as in the proof of \cite[Theorem 2.16 and Theorem 2.17]{CK19} and making use of analyticity, we immediately obtain the following lemma.
\begin{lemma}\label{2.5}
(a) $\left\{u_H^{\infty}(\hat{x}):\hat{x}\in\mathbb{S}^{m-1}\right\}$ uniquely determines $u_H(x)$ for $x\in\mathbb R^m\ba\ov{D}$.

(b) $\left\{u_M^{\infty}(\hat{x}):\hat{x}\in\mathbb{S}^{m-1}\right\}$ uniquely determines $u_M(x)$ for $x\in\mathbb R^m\ba\ov{D}$.
\end{lemma}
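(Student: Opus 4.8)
The plan is to reduce both parts to a single vanishing statement. By linearity of the forward problem, (a) is equivalent to the claim that $u_H^\infty\equiv 0$ on $\Sp^{m-1}$ implies $u_H\equiv 0$ in $\R^m\ba\ov D$, and (b) to the analogous claim for $u_M$; so I would fix $R>0$ with $D\subset\{|x|\le R\}$ and suppose the relevant far field vanishes. The key tool is Lemma \ref{lem260105}, which for $|x|>R$ writes $u_H$ and $u_H^\infty$ in terms of the \emph{same} coefficients, $\alpha_{H,n}$ when $m=2$ and $\alpha_{H,n,l}$ when $m=3$, the two expansions differing only by the nonvanishing modulating factors (such as $e^{-i(n\pi/2+\pi/4)}$ and $\sqrt{2/(k\pi)}$ for $m=2$).

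First I would match coefficients: since a continuous function on $\Sp^{m-1}$ is determined by its Fourier, respectively spherical-harmonic, coefficients, $u_H^\infty\equiv 0$ forces every $\alpha_{H,n}$ (resp.\ $\alpha_{H,n,l}$) to vanish, hence $u_H\equiv 0$ on $\{|x|>R\}$. Then I would propagate this to the full exterior: $u_H$ solves the Helmholtz equation \eqref{1'}, so it is real-analytic on the connected open set $\R^m\ba\ov D$, and vanishing on the nonempty open subset $\{|x|>R\}$ forces $u_H\equiv 0$ throughout $\R^m\ba\ov D$ by analytic continuation; this is the mechanism behind \cite[Theorem 2.16 and Theorem 2.17]{CK19}. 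Part (b) is obtained by rerunning these two steps with the modified Helmholtz equation \eqref{1''} and the expansions of $u_M$, $u_M^\infty$ from Lemma \ref{lem260105}, whose modulating factors (e.g.\ $e^{-i(n\pi/2+\pi/2)}$ and $1/(ik)$) are again nonzero.

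The step I expect to require the most care is (b), because the modified Helmholtz equation has no classical Rellich-type lemma: one cannot simply invoke a ready-made ``far field determines the field'' theorem and must instead read off the vanishing of the coefficients directly from the exterior expansion in Lemma \ref{lem260105}. That expansion is legitimate precisely because $H_n^{(1)}(ik|x|)$ and $h_n^{(1)}(ik|x|)$ decay exponentially while their ``second kind'' counterparts grow exponentially (see \eqref{260105}), which is what already eliminated the incoming terms in the proof of Lemma \ref{lem260105}. Granting this, the remaining analytic-continuation argument is routine.
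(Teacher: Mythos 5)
Your argument is correct and follows essentially the same route as the paper, whose proof consists precisely of invoking the series expansions of Lemma \ref{lem260105} to match the coefficients of the field and its far field pattern for $|x|>R$ (as in \cite[Theorems 2.16 and 2.17]{CK19}) and then extending to all of $\R^m\ba\ov{D}$ by analyticity. Your observation that the modified Helmholtz part must be handled through the exterior expansion rather than a Rellich-type lemma is exactly the point of introducing Lemma \ref{lem260105}.
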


The following Atkinson-type asymptotic expansion for $u_H$ (see \cite[(2.7)]{NV20}) plays an important role in \cite{NV20,NV22}.
It is easy to show that $u_M$ also has its Atkinson-type asymptotic expansion.
\begin{lemma}\label{2.6}
Let $(u_H,u_M)$ solve \eqref{1'}--\eqref{3''}.
For any fixed $N\in\N$, we have
\be\label{AWH} u_H(x)=\frac{e^{ik|x|}}{|x|^{(m-1)/2}}\left\{\sum_{j=1}^{N}\frac{f_j(\hat x)} {|x|^{j-1}}+O\left(\frac{1}{|x|^N}\right)\right\}\quad\text{as }|x|\to\infty,\\ \label{AWM}
u_M(x)=\frac{e^{-k|x|}}{|x|^{(m-1)/2}}\left\{\sum_{j=1}^{N}\frac{g_j(\hat x)}{|x|^{j-1}}+O\left(\frac{1}{|x|^N}\right)\right\}\quad\text{as }|x|\to\infty,
\en
where $f_j,g_j\in C^\infty(\Sp^{m-1})$ for $j=1,2,\cdots,N$.
In particular, $f_1=u_H^\infty$ and $g_1=u_M^\infty$ on $\Sp^{m-1}$.
\end{lemma}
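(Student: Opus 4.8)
The plan is to treat the two expansions separately. Formula \eqref{AWH} for $u_H$ is precisely the classical Atkinson-type expansion for radiating solutions of the Helmholtz equation, so I would simply invoke \cite[(2.7)]{NV20}, with $f_1=u_H^\infty$ obtained by matching the leading term against \eqref{2.3-1}. All the genuinely new work is therefore in establishing \eqref{AWM} for $u_M$, and here I would mimic the Helmholtz argument line by line, the only change being the relevant fundamental solution.

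I would start from the Green representation of Lemma \ref{2.1},
\ben
u_M(x)=\int_{\pa D}\left\{u_M(y)\frac{\pa\Phi_{ik}(x,y)}{\pa\nu(y)}-\frac{\pa u_M}{\pa\nu}(y)\Phi_{ik}(x,y)\right\}ds(y),\qquad x\in\R^m\ba\ov{D},
\enn
where $\Phi_{ik}(x,y)=e^{-k|x-y|}/(4\pi|x-y|)$ for $m=3$ and $\Phi_{ik}(x,y)=K_0(k|x-y|)/(2\pi)$ for $m=2$, $K_0$ being the modified Bessel function of order zero; the traces $u_M|_{\pa D}$ and $\pa_\nu u_M|_{\pa D}$ are well defined by the $C^{3,\alpha}$-regularity of $\pa D$ together with the forward theory recalled in Remark \ref{2.7}. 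Fixing $N\in\N$, I would expand, for $y$ in the compact set $\pa D$ and $|x|\to\infty$, $|x-y|=|x|-\hat x\cdot y+\rho(x,y)$, where $\rho$ admits a full asymptotic expansion in powers of $1/|x|$ whose coefficients are polynomials in $y$ with smooth dependence on $\hat x$. Substituting this into the large-argument asymptotic series of $e^{-t}/t$ (for $m=3$) and of $K_0(t)$ (for $m=2$) and factoring out $e^{-k|x|}/|x|^{(m-1)/2}$, I expect
\ben
\Phi_{ik}(x,y)=\frac{e^{-k|x|}}{|x|^{(m-1)/2}}\left\{\sum_{j=1}^{N}\frac{\phi_j(\hat x,y)}{|x|^{j-1}}+O\left(\frac{1}{|x|^N}\right)\right\},
\enn
together with the analogous expansion of $\pa\Phi_{ik}(x,y)/\pa\nu(y)$ with smooth coefficients $\psi_j(\hat x,y)$, both uniform in $y\in\pa D$ and $\hat x\in\Sp^{m-1}$.

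Plugging these two expansions into the Green representation and integrating term by term over $\pa D$ would yield \eqref{AWM} with
\ben
g_j(\hat x)=\int_{\pa D}\left\{u_M(y)\psi_j(\hat x,y)-\frac{\pa u_M}{\pa\nu}(y)\phi_j(\hat x,y)\right\}ds(y),
\enn
which lies in $C^\infty(\Sp^{m-1})$ by differentiation under the integral sign (since $\phi_j,\psi_j$ are smooth in $\hat x$), while the remainder stays uniform in $\hat x$ by compactness of $\pa D$; comparing the $j=1$ term with \eqref{2.3-2} and \eqref{2.3-4} then identifies $g_1=u_M^\infty$. The main obstacle, as always with Atkinson-type expansions, is to make the $O(|x|^{-N})$ remainder genuinely uniform in $y\in\pa D$ and $\hat x$ at the same time; I would handle this exactly as in \cite[Theorem 3.6]{CK83} and \cite[(2.7)]{NV20}, and in fact the exponential decay of $\Phi_{ik}$ (in contrast to the oscillation of $\Phi_k$) only makes these remainder estimates easier. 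An alternative is to expand each Hankel function in the series of Lemma \ref{lem260105}, but the term-by-term control in the summation index is more delicate, so I would prefer the integral-representation route.
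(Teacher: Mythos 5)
Your argument is correct, but it follows a genuinely different route from the paper. The paper's proof is a reduction to the known expansion for \emph{volume} potentials: it picks a cutoff $g\in C^\infty(\R^m)$ with $g=0$ on $D$ and $g=u_M$ at distance $>\varepsilon$ from $D$, writes $g$ as a volume potential of the compactly supported $C^\infty$ density $-[\Delta g-k^2g]$ via \cite[Theorem 8.1]{CK19}, and then cites \cite[Section 1.7]{MR1350074} for the Atkinson-type expansion of such potentials (the $u_H$ case being handled the same way, or directly by \cite[(2.7)]{NV20} as you do). You instead run the classical Atkinson--Wilcox argument on the \emph{surface} representation of Lemma \ref{2.1}, expanding $\Phi_{ik}(x,y)$ and $\pa\Phi_{ik}(x,y)/\pa\nu(y)$ in powers of $1/|x|$ uniformly for $y\in\pa D$; this is sound (note $|x-y|\ge|x|-\hat x\cdot y$, so the factor $e^{-k\rho}$ left after extracting $e^{-k|x|}e^{k\hat x\cdot y}$ is uniformly bounded and the remainder estimates are indeed easier than in the oscillatory case), and it has the bonus of producing explicit boundary-integral formulas for every $g_j$, not just $g_1$. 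The one caveat is that your route needs the traces $u_M|_{\pa D}$ and $\pa_\nu u_M|_{\pa D}$, an assumption stated in Lemma \ref{2.1} but not in Lemma \ref{2.6}; the paper's cutoff construction deliberately avoids touching $\pa D$. You can remove this dependence at no cost by applying Green's formula over a sphere $\pa B_R$ with $\ov D\subset B_R$ instead of over $\pa D$ (legitimate by Green's theorem in the annulus plus the radiation condition), where $u_M$ is real-analytic, after which your term-by-term integration and the smoothness of $g_j$ in $\hat x$ follow exactly as you describe, and $g_1=u_M^\infty$ is read off by comparison with \eqref{2.3-2} and \eqref{2.3-4}.
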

\begin{proof}
In view of \cite[Theorem 8.1]{CK19}, by setting $\kappa=-ik$ and $f=-[\Delta g+(-ik)^2g]$ with $g\in C^\infty(\R^m)$ satisfying
\ben
g(x):=\begin{cases}
0, & x\in D,\\
u_M(x), & x\in\R^m\ba\ov{D},\text{dist}(x,D)>\varepsilon,
\end{cases}
\enn
where $\varepsilon>0$ is an arbitrary constant, we deduce from the uniqueness of forward scattering problem that
\ben
u_M(x)=\int_{\R^m}\Phi_\kappa(x,y)f(y)dy,\quad x\in\R^m\ba\ov{D},\text{dist}(x,D)>\varepsilon.
\enn
Now \eqref{AWM} follows from the Atkinson-type asymptotic expansion for the above volume potential with $\kappa\in\{z\in\C\ba\{0\}:{\rm Im}\,z\leq0\}$ and $f\in C_c^\infty(\R^m)$ (see \cite[Section 1.7]{MR1350074}).
Obviously, \eqref{AWH} can be obtained similarly.
\end{proof}

\section{Uniqueness of phase retrieval}\label{s3}
\setcounter{equation}{0}

This section is devoted to the uniqueness in recovering phased biharmonic wave from the modulus of the total field in a nonempty domain.
As a direct corollary, the uniqueness of phaseless inverse scattering problem can be established.
The idea of our proof origins from \cite{N15}.

In the sequel, for an incident plane wave $u^i(x)=u^i(x,d)=e^{ikx\cdot d}$ we will indicate the dependence of the scattered field, of the total field, of the Helmholtz part, of the modified Helmholtz part, and of the far field patterns of Helmholtz part and modified Helmholtz part on the incident direction $d$ by writing, respectively, $u^s(x,d)$, $u(x,d)$, $u_H(x,d)$, $u_M(x,d)$, $u_H^\infty(\hat x,d)$, $u_M^\infty(\hat x,d)$.

\begin{theorem}\label{3.1}
Assume $\Omega$ is a nonempty domain contained in $\mathbb R^m\ba\ov{D}$ and $u$ solves \eqref{1}--\eqref{3}.
Then for any fixed $d\in\Sp^{m-1}$, $\{|u(x,d)|:x\in\Omega\}$ uniquely determines $u(x,d)$ in $\mathbb R^m\ba\ov{D}$.
\end{theorem}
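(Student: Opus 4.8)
The plan is to extract from the modulus data first the far field pattern $u_H^\infty(\cdot,d)$ of the Helmholtz part and then the far field pattern $u_M^\infty(\cdot,d)$ of the modified Helmholtz part, and to finish by means of Lemma \ref{2.5}. Since $u$ solves the constant-coefficient elliptic equation \eqref{1} in $\R^m\ba\ov D$, it is real-analytic there, and hence so is $|u(x,d)|^2=u(x,d)\,\ov{u(x,d)}$. As $\R^m\ba\ov D$ is connected and $\Om\subset\R^m\ba\ov D$ is a nonempty open set, the data $\{|u(x,d)|:x\in\Om\}$ determine $|u(x,d)|^2$ throughout $\R^m\ba\ov D$ by analytic continuation; in particular, since $D$ is bounded, for every fixed direction $\hat x\in\Sp^{m-1}$ the number $|u(r\hat x,d)|^2$ is known for all sufficiently large $r>0$. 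Note that only the decomposition $u=u^i+u_H+u_M$ and the properties of $u_H,u_M$ from Lemmas \ref{2.3} and \ref{2.5} are used; the boundary condition \eqref{2} does not enter.

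To recover $u_H^\infty$ I would expand $|u|^2$ at infinity. Since $|u^i|\equiv1$, $u_H=O(|x|^{-(m-1)/2})$ and $u_M=O(e^{-k|x|}|x|^{-(m-1)/2})$, every product involving the factor $u_M$ is exponentially small, so the only contribution at the order $|x|^{-(m-1)/2}$ is the cross term $2\real(\ov{u^i}u_H)$, giving
\[
|u(r\hat x,d)|^2-1=\frac{2}{r^{(m-1)/2}}\,\real\!\big(e^{ikr(1-\hat x\cdot d)}\,u_H^\infty(\hat x,d)\big)+O\big(r^{-(m-1)}\big)\qquad\text{as }r\to\infty .
\]
Fix $\hat x\ne d$, so the frequency $k(1-\hat x\cdot d)$ is nonzero; then $r^{(m-1)/2}(|u(r\hat x,d)|^2-1)$ is a known function of $r$ that equals $2\real(e^{i\omega r}u_H^\infty(\hat x,d))+o(1)$ with $\omega\ne0$. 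Because a function $r\mapsto\real(e^{i\omega r}a)$ with $\omega\ne0$ tends to $0$ only if $a=0$, this uniquely determines $u_H^\infty(\hat x,d)$ for all $\hat x\ne d$; the analyticity of $u_H^\infty$ on $\Sp^{m-1}$ (noted after Lemma \ref{2.3}) extends this to all $\hat x\in\Sp^{m-1}$, and then Lemma \ref{2.5}(a) recovers $u_H(x,d)$ for all $x\in\R^m\ba\ov D$.

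With $u_H(\cdot,d)$ now known \emph{exactly}, put $h:=u^i+u_H$ and form the known function
\[
F(x):=|u(x,d)|^2-|h(x)|^2=2\,\real\big(\ov{h(x)}\,u_M(x,d)\big)+|u_M(x,d)|^2,\qquad x\in\R^m\ba\ov D .
\]
At infinity the dominant part of $\ov h$ is $\ov{u^i}=e^{-ik|x|\hat x\cdot d}$, of modulus one, while $u_M=\frac{e^{-k|x|}}{|x|^{(m-1)/2}}(u_M^\infty(\hat x,d)+O(1/|x|))$; the remaining terms (coming from $\ov{u_H}u_M$ and from $|u_M|^2$) are of strictly lower order on the $e^{-k|x|}$-scale. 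Hence
\[
r^{(m-1)/2}e^{kr}F(r\hat x)=2\,\real\!\big(e^{-ikr\hat x\cdot d}\,u_M^\infty(\hat x,d)\big)+o(1)\qquad\text{as }r\to\infty .
\]
The same oscillation argument, applied now for $\hat x$ with $\hat x\cdot d\ne0$ (so that $k\hat x\cdot d\ne0$), recovers $u_M^\infty(\hat x,d)$ on the dense open set $\{\hat x\in\Sp^{m-1}:\hat x\cdot d\ne0\}$; analyticity of $u_M^\infty$ then determines it on all of $\Sp^{m-1}$, and Lemma \ref{2.5}(b) gives $u_M(x,d)$ for all $x\in\R^m\ba\ov D$. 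Therefore $u(x,d)=u^i(x,d)+u_H(x,d)+u_M(x,d)$ is uniquely determined in $\R^m\ba\ov D$.

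The main obstacle is the second stage, the recovery of $u_M^\infty$: since $u_M$ is exponentially small it contributes nothing to $|u|^2$ at any algebraically decaying order, so it can only be isolated once $u_H$ — the entire $|x|^{-(m-1)/2}$ layer — has been determined exactly, and one must then check that multiplying the residual $F$ by $e^{kr}$ leaves a clean oscillatory principal term. This decoupling of the Helmholtz and modified-Helmholtz far fields is exactly the feature absent from the acoustic case treated in \cite{N15,N16}. A minor technical point throughout is the degeneration of the oscillation frequency on a set of directions of measure zero — $\hat x=d$ when recovering $u_H^\infty$, and $\hat x\perp d$ when recovering $u_M^\infty$ — which is dealt with using the analyticity of the far field patterns on $\Sp^{m-1}$.
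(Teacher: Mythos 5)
Your proposal is correct and follows essentially the same route as the paper: extract $u_H^\infty(\hat x,d)$ for $\hat x\neq d$ from the $|x|^{-(m-1)/2}$ cross term in $|u|^2-1$, then subtract $|u^i+u_H|^2$, rescale by $|x|^{(m-1)/2}e^{k|x|}$ to expose $u_M^\infty(\hat x,d)$ for $\hat x\cdot d\neq 0$, and finish with analyticity of the far field patterns and Lemma \ref{2.5}. The only cosmetic difference is that you settle the leading oscillatory term by the soft observation that $\real(e^{i\omega r}a)\to 0$ forces $a=0$, whereas the paper inverts an explicit $2\times 2$ system sampled along two arithmetic sequences of radii; both are valid for uniqueness.
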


\begin{proof}

By analyticity, $\{|u(x,d)|:x\in\Omega\}$ is equivalent to $\{|u(x,d)|:x\in\R^m\ba\ov{D}\}$.
Noting that $u=u^i+u_H+u_M$ in $\mathbb R^m\ba\ov{D}$, we divide the proof into two steps.

\textbf{Step 1: Determing $u_H$.}

It follows from \eqref{2.3-1} and \eqref{2.3-2} that
\ben
u(x,d)=e^{ikx\cdot d}+e^{ik|x|}|x|^{(1-m)/2}u_H^{\infty}(\hat x,d)+O(|x|^{-(m+1)/2}),\quad|x|\to\infty.
\enn
A straightforward calculation shows that
\be\label{22-1}
|x|^{(m-1)/2}\left(|u(x,d)|^2-1\right)=2{\rm Re}[e^{ik|x|(1-\hat{x}\cdot d)}u_H^{\infty}(\hat x,d)]+O(|x|^{(1-m)/2}).
\en
Define $\alpha_H=\alpha_H(\hat x,d)$ and $\beta_H=\beta_H(|x|,\hat x,d)$ by
\ben
u_H^\infty(\hat x,d)=|u_H^{\infty}(\hat x,d)|e^{i\alpha_H(\hat x,d)},\quad\beta_H(|x|,\hat x,d):=k|x|(1-\hat{x}\cdot d).
\enn
Then we have
\be\label{250830-1}
|x|^{(m-1)/2}(|u(x,d)|^2-1)=2|u_H^{\infty}(\hat x,d)|\cos(\alpha_H(\hat x,d)+\beta_H(|x|,\hat x,d))+O(|x|^{(1-m)/2}).
\en

For $\hat x_*\cdot d\neq 1$, set $x_n^{(\ell)}=r_n^{(\ell)}\hat x_*$ with $r_n^{(\ell)}=r_0^{(\ell)}+\frac{2n\pi}{k(1-\hat{x}_*\cdot d)}$, $\ell=1,2$ and $n=0,1,2\cdots$.
Here, $r_0^{(1)}$ and $r_0^{(2)}$ are appropriately chosen such that
\be\label{250830-2}
\sin(\beta_H(r_0^{(1)},\hat x_*,d)-\beta_H(r_0^{(2)},\hat x_*,d))\neq0.
\en
Noting that $\beta_H(r_n^{(\ell)},\hat x_*,d)=\beta_H(r_0^{(\ell)},\hat x_*,d)+2n\pi$ for all $n\in\N$ and $\ell=1,2$, we deduce from \eqref{250830-1} that
\ben
&&\quad\lim_{n\ra\infty}\frac{1}{2}\left({\begin{array}{cc}
(r_{n}^{(1)})^{(m-1)/2}(|u(x_n^{(1)},d)|^2-1) \\
(r_{n}^{(2)})^{(m-1)/2}(|u(x_n^{(2)},d)|^2-1)
\end{array} }\right)\\
&&=|u_H^{\infty}(\hat x_*,d)|\left({\begin{array}{cc}
\cos\beta_H(r_0^{(1)},\hat x_*,d) & -\sin\beta_H(r_0^{(1)},\hat x_*,d) \\
\cos\beta_H(r_0^{(2)},\hat x_*,d) & -\sin\beta_H(r_0^{(2)},\hat x_*,d)
\end{array} } \right)\left({\begin{array}{cc}
\cos\alpha_H(\hat x_*,d) \\
\sin\alpha_H(\hat x_*,d)
\end{array} } \right).
\enn
In view of \eqref{250830-2}, the matrix in above formula is invertible and thus the phased far field pattern $u_H^\infty(\hat x_*,d)=|u^\infty(\hat x_*,d)|(\cos\alpha_H(\hat x_*,d)+i\sin\alpha_H(\hat x_*,d))$ is uniquely determined by phaseless data $\{|u(x,d)|:|x|>R,x/|x|=\hat x_*\}$, where $R>0$ is a sufficiently large constant.

Finally, $u_H^\infty(d,d)$ is determined by $\{u_H^\infty(\hat x,d):\hat x\cdot d\neq 1\}$ since $u_H^\infty(\hat x,d)$ is analytic in $\hat x\in\Sp^{m-1}$.

By Lemma \ref{2.5}, $\{u_H(x,d):x\in\mathbb R^m\ba\ov{D}\}$ can be uniquely determined by the phaseless data $\{|u(x,d)|:x\in\Omega\}$.

\textbf{Step 2: Determing $u_M$.}

We deduce from \eqref{2.3-1}, \eqref{2.3-2} and $u(x,d)=e^{ikx\cdot d}+u_H(x,d)+u_M(x,d)$ that
\be\no
&&v(x,d):=|x|^{(m-1)/2}e^{k|x|}\{|u(x,d)|^2-|e^{ikx\cdot d}+u_H(x,d)|^2\}\\\label{20-3}
&&\qquad\quad\;\;=e^{-ikx\cdot d}u_M^\infty(\hat x,d)+e^{ikx\cdot d}\ov{u_M^\infty(\hat x,d)}+O(|x|^{(1-m)/2}).
\en
Define $\alpha_M=\alpha_M(\hat x,d)$ and $\beta_M=\beta_M(x,d)$ by
\ben
u_M^{\infty}(\hat x,d)=|u_M^{\infty}(\hat x,d)|e^{i\alpha_M(\hat x,d)},\quad\beta_M(x,d):=kx\cdot d.
\enn
Then it follows from \eqref{20-3} that
\be\label{250831-1}
v(x,d)=2|u_M^\infty(\hat x,d)|\cos(\alpha_M(\hat x,d)-\beta_M(x,d))+O(|x|^{(1-m)/2}).
\en
For $\hat x_*\cdot d\neq0$, set $\tilde r_n^{(\ell)}=\tilde r_0^{(\ell)}+\frac{2n\pi}{k\hat x_*\cdot d}$, $\ell=1,2$ and $n=0,1,2,\cdots$.
Here, $\tilde r_0^{(1)}$ and $\tilde r_0^{(2)}$ are appropriately chosen such that
\be\label{250831-2}
\sin\left(\beta_M(\tilde r_0^{(1)}\hat x_*,d)-\beta_M(\tilde r_0^{(2)}\hat x_*,d)\right)\neq0.
\en
Since $\beta_M(\tilde r_n^{(\ell)}\hat x_*,d)=\beta_M(\tilde r_0^{(\ell)}\hat x_*,d)+2n\pi$ for all $n\in\N$ and $\ell=1,2$, it follows from \eqref{250831-1} that
\ben
\lim_{n\to\infty}\frac12\!\left(\!\!\!\begin{array}{cc}
(v(\tilde r_n^{(1)}\hat x_*,d)\\
(v(\tilde r_n^{(2)}\hat x_*,d)
\end{array}\!\!\!\right)\!=\!|u_M^\infty(\hat x,d)|\!\!\left(\!\!\!\begin{array}{cc}
\cos\beta_M(\tilde r_0^{(1)}\hat x_*,d) & \sin\beta_M(\tilde r_0^{(1)}\hat x_*,d)\\
\cos\beta_M(\tilde r_0^{(2)}\hat x_*,d) & \sin\beta_M(\tilde r_0^{(2)}\hat x_*,d)
\end{array}\!\!\!\right)\!\!\!\left(\!\!\!\begin{array}{c}
\cos\alpha_M(\hat x_*,d)\\
\sin\alpha_M(\hat x_*,d)
\end{array}\!\!\!\right).
\enn
In view of \eqref{250831-2}, the matrix in above formula is invertible.
Therefore, by the result in Step 1 and \eqref{20-3} the phased far field pattern $u_M^\infty(\hat x_*,d)=|u_M^\infty(\hat x_*,d)|(\cos\alpha_M(\hat x_*,d)+i\sin\alpha_M(\hat x_*,d))$ is uniquely determined by the phaseless data $\{|u(x,d)|:x\in\Om\}$.

Finally, $u_M^\infty(\hat x,d)$ for $\hat x\cdot d=0$ can be uniquely determined by $\{u_M^\infty(\hat x,d):\hat x\cdot d\neq0\}$ since $u_M^\infty(\hat x,d)$ is analytic in $\hat x\in\Sp^{m-1}$.

By Lemma \ref{2.5}, $\{u_M(x,d):x\in\mathbb R^m\ba\ov{D}\}$ can be uniquely determined by the phaseless data $\{|u(x,d)|:x\in\Omega\}$.
\end{proof}
Denote by $w(x,y,\kappa)$, $w_H(x,y,\kappa)$, $w_M(x,y,\kappa)$, $w_H^{\infty}(\hat{x},y,\kappa)$, $w_M^{\infty}(\hat{x},y,\kappa)$ the total field, the Helmholtz part, the modified Helmholtz part and the far field patterns of Helmholtz part and modified Helmholtz part corresponding to the incident field $w^i(\cdot,x,\kappa)=\Phi_\kappa(\cdot,x)$, $\kappa=k,ik$.
\begin{lemma}\label{rem:260105}
For biharmonic obstacle $D$ with boundary condition being any one of the six boundary conditions mentioned in Section \ref{s2}, we have the reciprocity relations
\ben
\ u_H(y,d)=\frac{2(2\pi)^{\frac{m-1}{2}}}{ik^{\frac{m-3}{2}}}e^{i\frac{m-1}{4}\pi}w_H^{\infty}(-d,y,k),
\quad u_M(y,d)=\frac{2(2\pi)^{\frac{m-1}{2}}}{ik^{\frac{m-3}{2}}}e^{i\frac{m-1}{4}\pi}w_H^{\infty}(-d,y,ik)
\enn
provided that $y\in\mathbb R^m\ba\ov{D}$ and $d\in\Sp^{m-1}$.
\end{lemma}
\begin{proof}
The reciprocity relations for $\mathscr{B}u=(u,\pa_\nu u),(u,\Delta u),(\pa_\nu u,\pa_\nu\Delta u),(\Delta u,\pa_\nu\Delta u)$ have been established in \cite[Theorem 5.1 and Remark 5.4]{wu2024}. Regarding the reciprocity relations for $\mathscr{B}u=(u,\pa_\nu\Delta u),(\pa_\nu u,\Delta u)$,
it suffices to show \cite[(5.1)]{wu2024} also holds.

We can deduce from \eqref{HM} and $\mathscr{B}u=(u,\pa_\nu\Delta u)$ or $\mathscr{B}u=(\pa_\nu u,\Delta u)$ on $\pa D$ that
\ben
&&\quad\int_{\pa D}\left\{u\pa_{\nu}\Delta w+\Delta u\pa_{\nu} w-\pa_{\nu}\Delta u w-\pa_{\nu}u\Delta w\right\}ds\\
&&=k^2\int_{\pa D}\bigg\{(u_H+u_M+u^i)\pa_{\nu}(-w_H+w_M-w^i)+(-u_H+u_M-u^i)\pa_{\nu}(w_H+w_M+w^i)\\
&&\qquad\qquad-\pa_{\nu}(-u_H+u_M-u^i)(w_H+w_M+w^i)-\pa_{\nu}(u_H+u_M+u^i)(-w_H+w_M-w^i)\bigg\}ds\\
&&=2k^2\int_{\pa D}\bigg\{\pa_{\nu}(u_H+u^i)(w_H+w^i)-(u_H+u^i)\pa_{\nu}(w_H+w^i)+u_M\pa_{\nu}w_M-\pa_{\nu}u_M w_M\bigg\}ds\\
&&=4k^2\int_{\pa D}\left\{-\pa_{\nu}u_M w_M+u_M\pa_{\nu}w_M\right\}ds\\
&&=4k^2\int_{\pa B_r}\left\{-\pa_{\nu}u_M w_M+u_M\pa_{\nu}w_M\right\}ds
\enn
for $\kappa=k$ and
\ben
&&\quad\int_{\pa D}\left\{u\pa_{\nu}\Delta w+\Delta u\pa_{\nu} w-\pa_{\nu}\Delta u w-\pa_{\nu}u\Delta w\right\}ds\\
&&=k^2\int_{\pa D}\bigg\{(u_H+u_M+u^i)\pa_{\nu}(-w_H+w_M+w^i)+(-u_H+u_M-u^i)\pa_{\nu}(w_H+w_M+w^i)\\
&&\qquad\qquad-\pa_{\nu}(-u_H+u_M-u^i)(w_H+w_M+w^i)-\pa_{\nu}(u_H+u_M+u^i)(-w_H+w_M+w^i)\bigg\}ds\\
&&=2k^2\int_{\pa D}\bigg\{\pa_{\nu}(u_H+u^i)w_H-(u_H+u^i)\pa_{\nu}w_H-\pa_{\nu}u_M(w_M+w^i)+u_M\pa_{\nu}(w_M+w^i)\bigg\}ds\\
&&=4k^2\int_{\pa D}\left\{-\pa_{\nu}u_M(w_M+w^i)+u_M(\pa_{\nu}w_M+\pa_{\nu}w^i)\right\}ds\\
&&=4k^2\int_{\pa B_r}\left\{-\pa_{\nu}u_M(w_M+w^i)+u_M(\pa_{\nu}w_M+\pa_{\nu}w^i)\right\}ds
\enn
for $\kappa=ik$, where $B_r$ is a disk or ball with radius $r$ large enough.
Passing to the limit $r\to\infty$, we deduce from \eqref{2.3-2} that \cite[(5.1)]{wu2024} also holds for $\mathscr{B}u=(u,\pa_\nu\Delta u),(\pa_\nu u,\Delta u)$.
\end{proof}
Analogously to \cite[Theorem 5.3]{wu2024}, the next lemma follows.
\begin{lemma}\label{sym}
For biharmonic obstacle $D$ with boundary condition being any one of the six boundary conditions mentioned in Section \ref{s2}, we have the reciprocity relations
\ben
\begin{pmatrix}
w_M(x,y,k) & w_H(x,y,k) \\
w_M(x,y,ik) & w_H(x,y,ik)
\end{pmatrix}
=
\begin{pmatrix}
w_H(y,x,ik) & w_H(y,x,k) \\
w_M(y,x,ik) & w_M(y,x,k)
\end{pmatrix}
\enn
where $x,y\in\mathbb R^m\ba\ov{D}$.
\end{lemma}
As a direct corollary of Theorem \ref{3.1}, we have the following uniqueness result. The proof is analogous to \cite[Theorem 5.6]{CK19}.
\begin{theorem}\label{3.2}
Let $D_1$ and $D_2$ be two bounded domains with boundary conditions $\mathscr{B}_1$ and $\mathscr{B}_2$, respectively. $\mathscr{B}_j$ should be any one of the six boundary conditions mentioned in Section \ref{s2}, $j=1,2$.
Assume $\Omega$ is a nonempty domain in $\mathbb R^m\ba\ov{D}$.
If the corresponding total fields satisfy
\be\label{251206}
|u_1(x,d)|=|u_2(x,d)|\quad\forall x\in\Omega,d\in\Sp^{m-1},
\en
then $D_1=D_2$ and $\mathscr{B}_1=\mathscr{B}_2$.
\end{theorem}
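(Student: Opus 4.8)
The plan is to combine Theorem \ref{3.1} with a now-standard contradiction argument of Kirsch--Kress type, adapted to the biharmonic setting by working separately with the Helmholtz component $u_H$ and the modified Helmholtz component $u_M$. First I would invoke Theorem \ref{3.1}: the hypothesis \eqref{251206} gives, for each fixed $d\in\Sp^{m-1}$, that $|u_1(x,d)|=|u_2(x,d)|$ on $\Omega$, hence (by analyticity and Theorem \ref{3.1}) that the phased total fields agree, $u_1(x,d)=u_2(x,d)$, on the connected component of $\R^m\ba(\ov{D_1}\cup\ov{D_2})$ containing $\Omega$ and, by unique continuation for the biharmonic equation, on all of the unbounded component $G$ of $\R^m\ba(\ov{D_1}\cup\ov{D_2})$. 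Since $u^i$ is common, this yields $u_1^s=u_2^s$ in $G$, and therefore, taking the Helmholtz/modified-Helmholtz decomposition \eqref{HM} (which is determined pointwise by $u^s$ and $\Delta u^s$), also $u_{H,1}=u_{H,2}$ and $u_{M,1}=u_{M,2}$ in $G$.

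Next I would prove $D_1=D_2$ by contradiction. Suppose $D_1\neq D_2$; then (after possibly swapping indices) there is a point $z^*\in\pa D_1$ with $z^*\notin\ov{D_2}$, and a small ball $B_\rho(z^*)$ whose closure lies in $\R^m\ba\ov{D_2}$ but meets $\R^m\ba\ov{D_1}$. The standard device is to use a sequence of incident point sources (or, since only plane waves are available here, to use the mixed reciprocity / density argument as in \cite[Theorem 5.6]{CK19}) to drive the scattered field for $D_2$ to remain bounded near $z^*$ while the scattered field for $D_1$ blows up: choose $z_j\to z^*$ from outside $D_1$ along the normal, consider incident fields whose scattered data concentrate singularity at $z_j$, and use $u_1=u_2$ in $G$ together with the well-posedness of the forward problem for $D_2$ (Remark \ref{2.7}) to get a uniform bound, contradicting the boundary condition $\mathscr B_1 u_1=(0,0)$ on $\pa D_1$ forcing a controlled but nonvanishing behaviour of $u_1$ near $z^*$. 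Since all six boundary conditions listed in Section \ref{s2} are of the form $\mathscr B u=(0,0)$ with $\mathscr B$ built from $u,\pa_\nu u,\Delta u,\pa_\nu\Delta u$, the trace of $u_1$ (and of its relevant derivatives) on $\pa D_1$ is pinned down, and the blow-up/boundedness dichotomy applies uniformly in the choice of $\mathscr B_1$. This yields the contradiction, so $D:=D_1=D_2$.

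With $D_1=D_2=D$ established, it remains to show $\mathscr B_1=\mathscr B_2$. Here $u_1=u_2=:u$ in $\R^m\ba\ov D$, so in particular the four Cauchy data $u|_{\pa D}$, $\pa_\nu u|_{\pa D}$, $\Delta u|_{\pa D}$, $\pa_\nu\Delta u|_{\pa D}$ all coincide for the two problems. If $\mathscr B_1\neq\mathscr B_2$, then combining the two boundary conditions $\mathscr B_1 u=(0,0)$ and $\mathscr B_2 u=(0,0)$ forces additional linear relations among these four traces; for any two distinct choices among the six listed pairs, one checks that this over-determination forces enough of the Cauchy data to vanish that a uniqueness theorem (e.g. Holmgren / unique continuation for $\Delta^2-k^4$, or the injectivity of the relevant boundary integral operators as in Remark \ref{2.7}) gives $u\equiv u^i$ in $\R^m\ba\ov D$, contradicting that $u^i=e^{ikx\cdot d}$ is not itself a radiating solution unless the scatterer is absent. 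Hence $\mathscr B_1=\mathscr B_2$.

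The main obstacle is the blow-up step establishing $D_1=D_2$ with only plane-wave incidence: unlike the point-source proofs in \cite{N15,N16,MR4019532}, one must route the singular-source construction through the determined agreement $u_1=u_2$ in $G$ rather than through phaseless far-field data directly, and one must verify that the uniform bound from the $D_2$-side forward estimate is genuinely uniform over the family of approximating incident fields and over all six admissible boundary conditions $\mathscr B_2$. I expect the decoupling of the $u_H$ and $u_M$ integral equations (noted in Remark \ref{2.7}) to be what makes this uniformity tractable, since it reduces the biharmonic estimate to two scalar Helmholtz-type estimates for which the Kirsch--Kress machinery of \cite[Chapter 5]{CK19} applies essentially verbatim.
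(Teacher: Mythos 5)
Your overall architecture matches the paper's proof: Theorem \ref{3.1} to upgrade \eqref{251206} to agreement of the phased fields (and hence of $u_{1,H},u_{2,H}$ and $u_{1,M},u_{2,M}$) in the unbounded component $G$, then a reciprocity/point-source blow-up argument in the style of \cite[Theorem 5.6]{CK19} to get $D_1=D_2$, then an over-determination argument on $\pa D$ to get $\mathscr B_1=\mathscr B_2$. The first two stages are essentially what the paper does (it routes the plane-wave agreement through the reciprocity relations of \cite{wu2024} to point-source scattered fields $w_j^s(\cdot,x,k)$, and then lets $x_n=x_0+\frac1n\nu(x_0)$ blow up $\mathscr B_1\Phi_k(x_0,x_n)$ against the boundedness of the $D_2$-solution), and your sketch, while vague about how the singular sources are manufactured, points at the right mechanism.

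The genuine gap is in the last step. You propose to conclude from $\mathscr B_1u=\mathscr B_2u=(0,0)$ via ``Holmgren / unique continuation for $\Delta^2-k^4$.'' Holmgren for the fourth-order operator requires \emph{all four} traces $u,\pa_\nu u,\Delta u,\pa_\nu\Delta u$ to vanish on $\pa D$, but combining two distinct conditions from the list of six generically yields only \emph{three} of them: e.g.\ $\mathscr B_1u=(u,\pa_\nu u)$ together with $\mathscr B_2u=(u,\Delta u)$ gives $u=\pa_\nu u=\Delta u=0$ but says nothing about $\pa_\nu\Delta u$. So the direct unique-continuation route fails in several of the fifteen cases. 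The paper's actual mechanism is two-staged and uses the decomposition \eqref{HM} in an essential way: every pair of distinct conditions forces either $u=\Delta u=0$ or $\pa_\nu u=\pa_\nu\Delta u=0$ on $\pa D$, which translates into homogeneous Dirichlet or Neumann data for $u_M$ alone; the uniqueness of the exterior Dirichlet/Neumann problem for the \emph{modified} Helmholtz equation (which exploits the exponential decay, not Cauchy data) then kills $u_M$ in $\R^m\ba\ov D$. Only after that does $u=u^i+u_H$ solve the second-order Helmholtz equation, for which the three available traces (using $\Delta u=-k^2u$ and $\pa_\nu\Delta u=-k^2\pa_\nu u$ on $\pa D$) do reduce to $u=\pa_\nu u=0$, and Holmgren applies. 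Also note the conclusion of Holmgren is $u\equiv0$ in $\R^m\ba\ov D$ (not $u\equiv u^i$), the contradiction being that $u_H=-u^i$ cannot hold since $u_H$ decays while $|u^i|=1$. Without the intermediate elimination of $u_M$, your argument does not close.
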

\begin{proof}
Denote by $G$ the unbounded connected part of $\R^m\ba\ov{D_1\cup D_2}$. By Theorem \ref{3.1} we deduce from \eqref{251206} that
\ben
\ u_{1,H}(x,d)=u_{2,H}(x,d),\;u_{1,M}(x,d)=u_{2,M}(x,d)\quad\forall x\in G,d\in\Sp^{m-1},
\enn
where $u_{j,H}(x,d)$ and $u_{j,M}(x,d)$ are the Helmholtz part and the modified Helmholtz part corresponding to $D_j(j=1,2)$ with incident field $u^i(x,d)=e^{ikx\cdot d}$.

It follows from Lemma \ref{rem:260105} that
\ben
w_{1,H}^\infty(-d,x,k)=w_{2,H}^\infty(-d,x,k),\;w_{1,H}^\infty(-d,x,ik)=w_{2,H}^\infty(-d,x,ik)\quad\forall x\in G,d\in\Sp^{m-1},
\enn
where $w_{j,H}^\infty(\cdot,x,k)$ and $w_{j,H}^\infty(\cdot,x,ik)$ are the far field patterns of Helmholtz parts $w_{j,H}(\cdot,x,k)$ and $w_{j,H}(\cdot,x,ik)$ corresponding to $D_j(j=1,2)$ with incident fields $w^i(\cdot,x,k)=\Phi_{k}(\cdot,x)$ and $w^i(\cdot,x,ik)=\Phi_{ik}(\cdot,x)$, respectively.
Then, by Theorem \ref{2.5} we have
\ben
w_{1,H}(x,z,k)=w_{2,H}(x,z,k),\;w_{1,H}(x,z,ik)=w_{2,H}(x,z,ik)\quad\forall x,z\in G.
\enn
Furthermore, it follows from Lemma \ref{sym} that
\ben
w_{1,H}(z,x,k)=w_{2,H}(z,x,k),\;w_{1,M}(z,x,k)=w_{2,M}(z,x,k)\quad\forall x,z\in G,
\enn
where $w_{j,M}(\cdot,x,k)$ is the modified Helmholtz part corresponding to $D_j(j=1,2)$ with incident field $w^i(\cdot,x,k)=\Phi_{k}(\cdot,x)$.
In view of the above equations, we have
\ben
w_1^s(z,x,k)=w_{1,H}^s(z,x,k)+w_{1,M}^s(z,x,k)=w_{2,H}^s(z,x,k)+w_{2,M}^s(z,x,k)=w_2^s(z,x,k)
\enn
for all $x,z\in G$, where $w_j^s(\cdot,x,k)$ is the scattered field corresponding to $D_j(j=1,2)$ with incident field $w^i(\cdot,x,k)=\Phi_{k}(\cdot,x)$.

Now suppose $D_1\neq D_2$. Then, without loss of generality, there exists a $x_0\in\pa G$ such that $x_0\in\pa D_1$ and $x_0\notin\ov{D_2}$. In particular we have
\ben
\ x_n:=x_0+\frac{1}{n}\nu(x_0)\in G
\enn
for sufficiently large $n$. Then, on one hand we obtain that
\ben
\lim_{n\to\infty}\mathscr{B}_1 w_2^s(x_0,x_n,k)=\mathscr{B}_1 w_2^s(x_0,x_0,k),
\enn
since $w_2^s(x_0,\cdot,k)$ is continuously differentiable in a neighborhood of $x_0\notin\ov{D_2}$ due to the reciprocity relation (see Lemma \ref{sym}) and the well-posedness of the biharmonic scattering problem with boundary condition $\mathscr{B}_2$ on $\pa D_2$. On the other hand, we find that
\ben
\lim_{n\to\infty}\mathscr{B}_1 w_1^s(x_0,x_n,k)=\infty,
\enn
because of the boundary condition $\mathscr{B}_1 w_1^s(x_0,x_n,k)=-\mathscr{B}_1\Phi_{k}(x_0,x_n)$ on $\pa D_1$. This contradicts $w_1^s(x_0,x_n,k)=w_2^s(x_0,x_n,k)$ for all sufficiently large $n$, and therefore $D_1=D_2$.

Now we set $D=D_1=D_2$ and $u=u_1=u_2$ in $\R^m\ba\ov{D}$.
Suppose $\mathscr{B}_1\neq\mathscr{B}_2$, then there are fifteen cases in total since there are six different boundary conditions mentioned in Section \ref{s2}.
In any case of these fifteen cases, it holds either $u=\Delta u=0$ or $\pa_{\nu} u=\pa_{\nu}\Delta u=0$ on $\pa D$.

For the case when $u=\Delta u=0$ on $\pa D$, we deduce from \eqref{HM} that
\ben
u_H=\frac12\left(u^s-\frac{\Delta u^s}{k^2}\right)=-\frac12\left(u^i-\frac{\Delta u^i}{k^2}\right)=-u^i&&\text{on }\pa D,\\
u_M=\frac12\left(u^s+\frac{\Delta u^s}{k^2}\right)=-\frac12\left(u^i+\frac{\Delta u^i}{k^2}\right)=0&&\text{on }\pa D.
\enn
It follows from the uniqueness of exterior Dirichlet boundary value problem for modified Helmholtz equation that $u_M=0$ in $\R^m\ba\ov{D}$.

For the case when $\pa_{\nu} u=\pa_{\nu}\Delta u=0$ on $\pa D$, we deduce from \eqref{HM} that
\ben
\pa_{\nu}u_H=\frac12\pa_\nu\left(u^s-\frac{\Delta u^s}{k^2}\right)=-\frac12\pa_\nu\left(u^i-\frac{\Delta u^i}{k^2}\right)=-\pa_{\nu}u^i&&\text{on }\pa D,\\
\pa_{\nu}u_M=\frac12\pa_\nu\left(u^s+\frac{\Delta u^s}{k^2}\right)=-\frac12\pa_\nu\left(u^i+\frac{\Delta u^i}{k^2}\right)=0&&\text{on }\pa D.
\enn
It follows from the uniqueness of exterior Neumann boundary value problem for modified Helmholtz equation that $u_M=0$ in $\R^m\ba\ov{D}$.

The vanishing modified Helmholtz part implies that $u$ solves the Helmholtz equation.
Moreover, in any case of the above fifteen cases, we can deduce from $\mathscr B_1u=\mathscr B_2u=(0,0)$ that
\ben
u=\pa_\nu u=0\quad\text{ on }\pa D.
\enn
It follows from Holmgren's theorem \cite[Theorem 2.3]{CK19} that $u_H+u^i=u=0$ in $\R^m\ba\ov{D}$.
This is a contradiction since $|u^i(\cdot,d)|=1$ and $u_H$ satisfies \eqref{2.3-1}.
Therefore, $\mathscr{B}_1=\mathscr{B}_2$.
\end{proof}

The uniqueness of phase retrieval for incident point sources can also be established. For an incident point source $G^i(x,y)=\frac{1}{2k^2}\left(\Phi_{ik}(x,y)-\Phi_k(x,y)\right)$, we will indicate the dependence of the scattered field, of the total field, of the Helmholtz part, of the modified Helmholtz part, and of the far field patterns of Helmholtz part and modified Helmholtz part on the location $y$ of the point source by writing, respectively, $G^s(x,y)$, $G(x,y)$, $G_H(x,y)$, $G_M(x,y)$, $G_H^\infty(\hat x,y)$, $G_M^\infty(\hat x,y)$.
\begin{theorem}\label{3.4}
Assume $\Omega$ is a nonempty domain contained in $\mathbb R^m\ba\ov{D}$ and $u$ solves \eqref{1}--\eqref{3}.
Then $\{|G(x,y)|:x,y\in\Om, x\neq y\}$ uniquely determines $\{|u(x,d)|:x\in\Om,d\in\Sp^{m-1}\}$.
\end{theorem}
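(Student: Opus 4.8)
The plan is to recover $\{|u(x,d)|:x\in\Omega,\ d\in\Sp^{m-1}\}$ from the data $\{|G(x,y)|:x,y\in\Omega,\ x\neq y\}$ \emph{without} reconstructing any phase. The two steps are: (i) extend the phaseless point-source data in the \emph{source} variable $y$ all the way to infinity, using that $y\mapsto|G(x,y)|^2$ is real-analytic; (ii) read off $|u(x,d)|$ from the limit in which the source point recedes to infinity, since a suitably normalised $G(x,y)$ converges to the plane-wave total field $u(x,d)$.

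\textbf{Step 1 (continuation in the source point).} Fix $x\in\Omega$. The key point is that $y\mapsto G(x,y)$ is real-analytic on $\R^m\ba(\ov D\cup\{x\})$. For the self-adjoint boundary conditions this is immediate from the reciprocity relation $G(x,y)=G(y,x)$ — which, writing $w(\cdot,z,\kappa)$ for the total field with incident field $\Phi_\kappa(\cdot,z)$, follows by linearity from $G^i=\frac1{2k^2}(\Phi_{ik}-\Phi_k)$ together with the near-field reciprocity relations for $\kappa=k$ and $\kappa=ik$ (see \cite[Theorem 5.3]{wu2024} and Remark \ref{rem:260105}): then $y\mapsto G(x,y)$ is the total field for the incident biharmonic point source $G^i(\cdot,x)$, which is real-analytic on $\R^m\ba\{x\}$, while the associated scattered field $G^s(\cdot,x)=G_H(\cdot,x)+G_M(\cdot,x)$ is real-analytic on $\R^m\ba\ov D$; for the remaining boundary conditions the same conclusion follows from the real-analytic dependence on the source point of the layer-potential densities used in Remark \ref{2.7}. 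Consequently $y\mapsto|G(x,y)|^2=(\real G(x,y))^2+(\Ima G(x,y))^2$ is real-analytic on the connected open set $\R^m\ba(\ov D\cup\{x\})$, so its values on the nonempty open subset $\Omega\ba\{x\}$ determine its values everywhere on $\R^m\ba(\ov D\cup\{x\})$. Letting $x$ range over $\Omega$, the given data determines $\{|G(x,y)|:x\in\Omega,\ y\in\R^m\ba(\ov D\cup\{x\})\}$.

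\textbf{Step 2 (plane-wave limit and conclusion).} Fix $d\in\Sp^{m-1}$ and set $y_t:=-td$ for $t>0$ so large that $y_t\notin\ov D\cup\{x\}$. The far-field asymptotics of $\Phi_k$ (Lemma \ref{2.3}) and the exponential decay of $\Phi_{ik}$ give
\[
\frac{-2k^2\,t^{(m-1)/2}}{\gamma_m(k)\,e^{ikt}}\,G^i(x,y_t)\ \longrightarrow\ e^{ikx\cdot d}=u^i(x,d)\qquad(t\to\infty),
\]
uniformly together with derivatives on compact subsets of $\R^m$, whence, by the well-posedness of the forward problem (Remark \ref{2.7}) and the linearity of the solution operator in the incident field,
\[
\frac{-2k^2\,t^{(m-1)/2}}{\gamma_m(k)\,e^{ikt}}\,G(x,y_t)\ \longrightarrow\ u(x,d)\qquad(t\to\infty),
\]
uniformly in $x$ on compact subsets of $\R^m\ba\ov D$. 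Taking moduli and using $|e^{ikt}|=1$ yields, for every $x\in\R^m\ba\ov D$,
\[
|u(x,d)|=\lim_{t\to\infty}\frac{2k^2\,t^{(m-1)/2}}{|\gamma_m(k)|}\,|G(x,-td)|.
\]
For $x\in\Omega$, the numbers $|G(x,-td)|$ (with $t$ large, so $-td\in\R^m\ba(\ov D\cup\{x\})$) are exactly the quantities obtained in Step 1; hence the right-hand side is determined by $\{|G(x,y)|:x,y\in\Omega,\ x\neq y\}$. Since $x\in\Omega$ and $d\in\Sp^{m-1}$ were arbitrary, this determines $\{|u(x,d)|:x\in\Omega,\ d\in\Sp^{m-1}\}$.

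The main obstacle is Step 1: one must establish that the total field depends real-analytically on the location of the source (away from $\partial D$ and from the observation point), either through the reciprocity identity $G(x,y)=G(y,x)$ — which, for each of the six admissible boundary conditions, reduces via linearity to the Helmholtz and modified-Helmholtz near-field reciprocity relations of \cite{wu2024} and Remark \ref{rem:260105} — or directly from the analytic dependence of the boundary-integral densities on the source point; one then only needs connectedness of $\R^m\ba(\ov D\cup\{x\})$ so that the (non-holomorphic but real-analytic) function $|G(x,\cdot)|^2$ can be continued from $\Omega$. Step 2 is routine, being the exact analogue of the classical point-source-to-plane-wave limit in acoustic scattering, and goes through once the well-posedness from Remark \ref{2.7} is in hand.
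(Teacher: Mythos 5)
Your argument is correct, but it follows a genuinely different route from the paper's. The paper fixes the source $y\in\Om$ and sends the \emph{observation} point to infinity: combining the asymptotics of $G^i(x,y)$ as $|x|\to\infty$ with the mixed reciprocity relation of \cite[Theorem 5.1]{wu2024} (extended to the remaining boundary conditions in Remark \ref{rem:260105}), it obtains $G(x,y)=-\frac{\gamma_m(k)}{2k^2}\,|x|^{-(m-1)/2}e^{ik|x|}\{u(y,-\hat x)+O(1/|x|)\}$ and simply takes the modulus. You instead fix the observation point $x\in\Om$ and send the \emph{source} to infinity, replacing the reciprocity relation by the point-source-to-plane-wave limit together with the continuous dependence of the solution on the incident field (Remark \ref{2.7}). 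The two routes are dual to one another and cost about the same; yours has the advantage of not needing the reciprocity relations at all (which the paper must separately verify for the boundary conditions $(u,\pa_\nu\Delta u)$ and $(\pa_\nu u,\Delta u)$), at the price of a locally uniform continuity estimate for the solution operator with respect to the boundary data. You also make explicit the real-analytic continuation of $|G(x,\cdot)|^2$ (equivalently, of $|G(\cdot,y)|^2$) from $\Om$ to the full exterior domain minus the singular point; the paper needs this step too --- its asymptotic formula is only exploitable for large $|x|$, whereas the data only contains $x\in\Om$ --- but leaves it implicit, as in the opening line of the proof of Theorem \ref{3.1}. Your justification of that analyticity via the layer-potential (or solution-operator) dependence on the source point is sound, and the connectedness of $\R^m\ba(\ov D\cup\{x\})$ that you invoke for the identity theorem is exactly what is required.
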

\begin{proof}
It follows from Lemma \ref{rem:260105} and
\ben
G^i(x,y)=-\frac{\gamma_m(k)} {2k^2}\frac{e^{ik|x|}}{|x|^{(m-1)/2}}\left\{u^i(y,-\hat x)+O\left(\frac1{|x|}\right)\right\},\quad|x|\to\infty,
\enn
uniformly for all $\hat x\in\Sp^{m-1}$, where $\gamma_m(k)$ is given in Lemma \ref{2.3} that
\ben
G(x,y)=-\frac{\gamma_m(k)}{2k^2}\frac{e^{ik|x|}}{|x|^{(m-1)/2}}\left\{u(y,-\hat x)+O\left(\frac1{|x|}\right)\right\},\quad|x|\to\infty,
\enn
uniformly for all $\hat x\in\Sp^{m-1}$.
The proof is completed by taking the modulus of both sides of the above equation.
\end{proof}
As a direct corollary of Theorem \ref{3.1}, Theorem \ref{3.2} and Theorem \ref{3.4}, we also have the following uniqueness result for incident point sources.
\begin{theorem}
Let $D_1$ and $D_2$ be two bounded domains with boundary conditions $\mathscr{B}_1$ and $\mathscr{B}_2$, respectively. $\mathscr{B}_j$ should be any one of the six boundary conditions mentioned in Section \ref{s2}, $j=1,2$.
Assume $\Omega$ is a nonempty domain in $\mathbb R^m\ba\ov{D}$.
If the corresponding total fields satisfy
\ben
|G_1(x,y)|=|G_2(x,y)|\quad\forall x,y\in\Om, x\neq y,
\enn
then $D_1=D_2$ and $\mathscr{B}_1=\mathscr{B}_2$.
\end{theorem}
\section{Explicit asymptotic multipoint formulas for phase retrieval}\label{s4}
\setcounter{equation}{0}

In this section, we first derive the explicit asymptotic multipoint formulas for $u^\infty_H$ and $u^\infty_M$ from the phased scattered field $u^s$ at a finite set of points. We then establish the corresponding formulas from the phaseless total-field data $|u|$ at twice as many points as in the phased case. Finally, the values of $u_H$ and $u_M$ outside the obstacle are recovered from the phased far-field patterns $u^\infty_H$ and $u^\infty_M$.
For analogues in acoustic wave case, we refer the reader to \cite{NV20,NV22}.
%

\begin{lemma}
Assume that $\sigma_1,\sigma_2,\cdots,\sigma_n$ are distinct positive constants.
For an arbitrarily fixed $\hat x\in\Sp^{m-1}$ we have
\be\label{7-1}
\left(\begin{array}{c}
f_1\\
f_2\\
\vdots\\
f_n
\end{array}\right)\!=\!\left(\begin{array}{cccc}
1 & \frac{1}{s_{1}} & \cdots & \frac{1}{s_{1}^{n-1}}\\
1 & \frac{1}{s_{2}} & \cdots & \frac{1}{s_{2}^{n-1}}\\
\vdots & \vdots & \ddots & \vdots  \\
1 & \frac{1}{s_{n}} & \cdots & \frac{1}{s_{n}^{n-1}}\\
\end{array}\right)^{-1}\left(\begin{array}{c}
s_1^{(m-1)/2}e^{-iks_1}u_H(s_1\hat x)\\
s_2^{(m-1)/2}e^{-iks_2}u_H(s_2\hat x)\\
\vdots\\
s_n^{(m-1)/2}e^{-iks_n}u_H(s_n\hat x)
\end{array}\right)\!+\!\left(\begin{array}{c}
O(s^{-n})\\
O(s^{1-n})\\
\vdots\\
O(s^{-1})
\end{array}\right),s\to\infty,\\ \label{7-2}
\left(\begin{array}{c}
g_1\\
g_2\\
\vdots\\
g_n
\end{array}\right)\!=\!\left(\begin{array}{cccc}
1 & \frac{1}{s_{1}} & \cdots & \frac{1}{s_{1}^{n-1}}\\
1 & \frac{1}{s_{2}} & \cdots & \frac{1}{s_{2}^{n-1}}\\
\vdots & \vdots & \ddots & \vdots  \\
1 & \frac{1}{s_{n}} & \cdots & \frac{1}{s_{n}^{n-1}}\\
\end{array}\right)^{-1}\left(\begin{array}{c}
s_1^{(m-1)/2}e^{ks_1}u_M(s_1\hat x)\\
s_2^{(m-1)/2}e^{ks_2}u_M(s_2\hat x)\\
\vdots\\
s_n^{(m-1)/2}e^{ks_n}u_M(s_n\hat x)
\end{array}\right)\!+\!\left(\begin{array}{c}
O(s^{-n})\\
O(s^{1-n})\\
\vdots\\
O(s^{-1})
\end{array}\right),\; s\to\infty,\;
\en
where $f_j$ and $g_j$ are given by \eqref{AWH} and \eqref{AWM}, respectively, and $s_j=s\sigma_j$ for $j=1,2,\cdots,n$.
\end{lemma}
\begin{proof}
According to Cramer's rule, the $j$-th row of the inverse of the Vandermonde matrix $(s_i^{1-j})_{n\times n}$ is of the same order as $s^{j-1}$, the proof is completed by using Lemma \ref{2.6}.
\end{proof}
\begin{remark}\label{20-1}
(1) Substituting \eqref{7-1} into \eqref{AWH} gives the multipoint formula for $u_H$ along the direction of $\hat x$.
Similarly, substituting \eqref{7-2} into \eqref{AWM} gives the multipoint formula for $u_M$ along the direction of $\hat x$.

(2) Since $u^s(x)=u_H(x)+u_M(x)=u_H(x)+O(e^{-k|x|}/|x|^{(m-1)/2})$ as $|x|\to\infty$ uniformly for all $\hat x\in\Sp^{m-1}$ (see Lemma \ref{2.3}), we deduce from \eqref{7-1} that
\be\label{-1}
\left(\begin{array}{c}
f_1\\
f_2\\
\vdots\\
f_n
\end{array}\right)\!=\!\left(\begin{array}{cccc}
1 & \frac{1}{s_{1}} & \cdots & \frac{1}{s_{1}^{n-1}}\\
1 & \frac{1}{s_{2}} & \cdots & \frac{1}{s_{2}^{n-1}}\\
\vdots & \vdots & \ddots & \vdots  \\
1 & \frac{1}{s_{n}} & \cdots & \frac{1}{s_{n}^{n-1}}\\
\end{array}\right)^{-1}\left(\begin{array}{c}
s_1^{(m-1)/2}e^{-iks_1}u^s(s_1\hat x)\\
s_2^{(m-1)/2}e^{-iks_2}u^s(s_2\hat x)\\
\vdots\\
s_n^{(m-1)/2}e^{-iks_n}u^s(s_n\hat x)
\end{array}\right)\!+\!\left(\begin{array}{c}
O(s^{-n})\\
O(s^{1-n})\\
\vdots\\
O(s^{-1})
\end{array}\right),\; s\to\infty.
\en


(3) Suppose $\Om\subset\R^{m-1}$ is a bounded domain containing $D$ and $\eta\neq0$ is a real coupling parameter.
In view of \cite[Section 3.2]{CK19}, there exists a density $\varphi_H\in C(\pa\Om)$ such that
\be\label{-4}
u_H(x)=\int_{\pa\Om}\left\{\frac{\pa\Phi_k(x,y)}{\pa\nu(y)}-i\eta\Phi_k(x,y)\right\}\varphi_H(y)ds(y),\quad x\in\R^m\ba\ov{\Om}.
\en
It follows from the asymptotic behavior of $\Phi_k(x,y)$ that (see \cite[(2.14) and (3.110)]{CK19})
\be\label{-2}
(K_{k,\pa\Om}^{\infty}-i\eta S_{k,\pa\Om}^{\infty})\varphi_H=u_H^{\infty}\quad\text{on }\Sp^{m-1}.
\en
Due to Lemma \eqref{2.5} (a) and the well-posedness of exterior Dirichlet boundary value problem, there exists a unique density $\varphi_H\in C(\pa\Om)$ satisfies \eqref{-2} for each $u_H^\infty$.
However, the equation \eqref{-2} is ill-posed since the kernels of integral operators $K_{k,\pa\Om}^{\infty}$ and $S_{k,\pa\Om}^{\infty}$ are analytic.
In the practical implementation, we may apply the Tikhonov regularization, that is,
\be\label{-3}
\varphi_H\approx[\alpha I+(K_{k,\pa\Om}^{\infty}-i\eta S_{k,\pa\Om}^{\infty})^{*}(K_{k,\pa\Om}^{\infty}-i\eta S_{k,\pa\Om}^{\infty})]^{-1}(K_{k,\pa\Om}^{\infty}-i\eta S_{k,\pa\Om}^{\infty})^*u_H^{\infty},
\en
where $\alpha>0$ is a proper regularization parameter (cf. \cite[Section 4.4]{CK19}).
Therefore, one can firstly approximately calculate $u_H^\infty(\hat x)=f_1(\hat x)$ from the knowledge of $u^s(x)$ at several points of $x$ along the direction via \eqref{-1}.
Then, the density $\varphi_H$ on $\pa\Om$ can be approximately obtained via the discrete form of \eqref{-3} from the knowledge of $u_H^\infty(\hat x)$ at several points of $\hat x$ on $\Sp^{m-1}$.
Therefore, $\{u_H(x):x\in\R^m\ba\ov{\Om}\}$ can be approximated by the discrete form of \eqref{-4} and $u_M^\infty(\hat x)=g_1(\hat x)$ can be approximately calculated from the knowledge of $u_M(x)=u^s(x)-u_H(x)$ at several points of $x$ along the direction via \eqref{7-2}.
Analogous to \eqref{-4} and \eqref{-2}, we have
\be\label{-6}
u_M(x)=\int_{\pa\Om}\left\{\frac{\pa\Phi_{ik}(x,y)}{\pa\nu(y)}-i\eta\Phi_{ik}(x,y)\right\}\varphi_M(y)ds(y),&&x\in\R^m\ba\ov{\Om},\\\label{-5}
(K_{ik,\pa\Om}^{\infty}-i\eta S_{ik,\pa\Om}^{\infty})\varphi_M=u_M^{\infty}&&\text{on }\Sp^{m-1},
\en
and the density $\varphi_M\in C(\pa\Om)$ can also be obtained analogously to \eqref{-3}.
This is the complete form of the multipoint formulas for phased scattered field.
It should be pointed out that the approximation error of $u_H$ will be amplified by the factors $s_j^{(m-1)/2}e^{ks_j}$, $j=1,2,\cdots,n$, in \eqref{7-2} as $s\to\infty$.
\end{remark}

The remaining part of this section is devoted to the asymptotic formulas for phase retrieval.
The derivation depends on the dimension of space.
We begin with the three-dimensional case.



\subsection{Three-dimensional case}

\begin{theorem}[Phase retrieval formula of $u_H^\infty$ when $m=3$]\label{4.2}
Assume that $\sigma_1,\sigma_2,\cdots,\sigma_n$ are distinct positive constants.
For an arbitrarily fixed $\hat x\in\Sp^2$ such that $\hat x\neq d$ we have
\be\label{14-0}
\left(f_j(\hat x),\ov{f_j(\hat x)}\right)^\top=\left(\tilde f_j(\hat x),\ov{\tilde f_j(\hat x)}\right)^\top+O\left(\frac1{t^{n+1-j}}\right),
\en
as $t\to\infty$, $j=1,2,\cdots,n$, where
\ben
\left(\tilde f_j(\hat x),\ov{\tilde f_j(\hat x)}\right)^\top=\left(\begin{array}{cc}
 e^{it_1k(1-\hat x\cdot d)} & e^{it_1k(\hat x\cdot d-1)}\\
 e^{i(t_1+\tau)k(1-\hat x\cdot d)} & e^{i(t_1+\tau)k(\hat x\cdot d-1)}\\
 \end{array}\right)^{-1}\left(\begin{array}{c}
 W_{j,H}\\
 \widetilde W_{j,H}
 \end{array}\right)
\enn
with the real constant $\tau\notin\left\{\frac{\ell\pi}{k(1-\hat x\cdot d)}:\ell\in\Z\right\}$. Here, $W_{j,H}=F_{j,H}-\sum_{\ell=1}^{j-1}\tilde f_\ell(\hat x)\ov{\tilde f_{j-\ell}(\hat x)}$, $\widetilde W_{j,H}=\widetilde F_{j,H}-\sum_{\ell=1}^{j-1}\tilde f_\ell(\hat x)\ov{\tilde f_{j-\ell}(\hat x)}$ and the functions $F_{j,H}=F_{j,H}(\hat x,t_1,t_2,\cdots,t_n)$ and $\widetilde F_{j,H}=\widetilde F_{j,H}(\hat x,\tilde t_1,\tilde t_2,\cdots,\tilde t_n)$ are given by
\be\label{14-1}
&&\left(\begin{array}{c}
F_{1,H}\\
F_{2,H}\\
\vdots\\
F_{n,H}
\end{array}\right)=\left(\begin{array}{cccc}
1 & \frac{1}{t_{1}} & \cdots & \frac{1}{t_{1}^{n-1}}\\
1 & \frac{1}{t_{2}} & \cdots & \frac{1}{t_{2}^{n-1}}\\
\vdots & \vdots & \ddots & \vdots  \\
1 & \frac{1}{t_{n}} & \cdots & \frac{1}{t_{n}^{n-1}}\\
\end{array}\right)^{-1}\left(\begin{array}{c}
t_1[|u(t_1\hat x,d)|^2-1]\\
t_2[|u(t_2\hat x,d)|^2-1]\\
\vdots\\
t_n[|u(t_n\hat x,d)|^2-1]
\end{array}\right),\\\label{14-2}
&&\left(\begin{array}{c}
\widetilde F_{1,H}\\
\widetilde F_{2,H}\\
\vdots\\
\widetilde F_{n,H}
\end{array}\right)=\left(\begin{array}{cccc}
1 & \frac{1}{\tilde t_{1}} & \cdots & \frac{1}{\tilde t_{1}^{n-1}}\\
1 & \frac{1}{\tilde t_{2}} & \cdots & \frac{1}{\tilde t_{2}^{n-1}}\\
\vdots & \vdots & \ddots & \vdots  \\
1 & \frac{1}{\tilde t_{n}} & \cdots & \frac{1}{\tilde t_{n}^{n-1}}\\
\end{array}\right)^{-1}\left(\begin{array}{c}
\tilde t_1[|u(\tilde t_1\hat x,d)|^2-1]\\
\tilde t_2[|u(\tilde t_2\hat x,d)|^2-1]\\
\vdots\\
\tilde t_n[|u(\tilde t_n\hat x,d)|^2-1]
\end{array}\right),
\en
respectively, with $t_j=\sigma_jt+c_j$ and $\tilde t_j=t_j+\tau$ with arbitrarily fixed $c_1\geq0$ and $c_j\in\left[0,\frac{2\pi}{k(1-\hat x\cdot d)}\right)$ for $j\geq2$ such that $t_j-t_1\in\left\{\frac{2\ell\pi}{k(1-\hat x\cdot d)}:\ell\in\Z\right\}$ for each $t$ and $j\in\{1,2,\cdots,n\}$.
\end{theorem}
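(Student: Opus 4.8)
The plan is to reduce the recovery of the Atkinson coefficients $f_j(\hat x)$ of $u_H$ (Lemma \ref{2.6}) to a Vandermonde inversion followed by a triangular recursion in $j$, adapting the strategy of \cite{NV20,NV22} from the acoustic to the biharmonic setting. First I would write $u=u^i+u_H+u_M$ with $|u^i|\equiv1$ on $\R^m$; since $u_M(x,d)=O\big(e^{-k|x|}|x|^{-1}\big)$ by Lemma \ref{2.3}, the contribution of $u_M$ to $|u|^2$ is exponentially small and negligible against any negative power of $|x|$. Inserting \eqref{AWH} with $N=n$ and expanding the square, a direct computation yields, for a fixed $\hat x\neq d$ and with $\beta:=k(1-\hat x\cdot d)>0$,
\[
t\big(|u(t\hat x,d)|^2-1\big)=\sum_{p=1}^{n}\frac{h_p(t)}{t^{p-1}}+O(t^{-n}),\qquad h_p(t):=2\,\real\!\big(e^{i\beta t}f_p(\hat x)\big)+\sum_{\ell=1}^{p-1}f_\ell(\hat x)\,\ov{f_{p-\ell}(\hat x)},
\]
as $t\to\infty$, where the self-interaction sum (coming from $|u_H|^2$) is empty for $p=1$; note each $h_p(t)$ is real.

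The point of the node construction is that $t_j-t_1$ lies in the lattice $\tfrac{2\pi}{\beta}\Z$ (this is exactly what the admissible shifts $c_j$, equivalently the admissible sequence $t\to\infty$, encode), so that $e^{i\beta t_j}=e^{i\beta t_1}$ and $e^{i\beta\tilde t_j}=e^{i\beta(t_1+\tau)}$ do not depend on $j$, while $t_j\asymp\sigma_j t$ keeps all remainders $O(t^{-n})$. Consequently $h_p(t_j)$ collapses to a single value $h_p:=2\,\real(e^{i\beta t_1}f_p)+\sum_{\ell<p}f_\ell\ov{f_{p-\ell}}$, and the above relations at $t_1,\dots,t_n$ form the Vandermonde system $\big(t_j(|u(t_j\hat x,d)|^2-1)\big)_{j=1}^{n}=V(h_1,\dots,h_n)^\top+O(t^{-n})$, with $V$ the matrix in $1/t_j$ occurring in \eqref{14-1}. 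Applying $V^{-1}$ and using, as in the proof of Lemma \ref{4.1}, that its $p$-th row has entries of order $t^{p-1}$, I get $F_{p,H}=h_p+O(t^{p-1-n})$; the same argument on the shifted nodes $\tilde t_j=t_j+\tau$ gives $\widetilde F_{p,H}=\widetilde h_p+O(t^{p-1-n})$ with $\widetilde h_p:=2\,\real(e^{i\beta(t_1+\tau)}f_p)+\sum_{\ell<p}f_\ell\ov{f_{p-\ell}}$.

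Next I would disentangle $f_j$ from the pair $(h_j,\widetilde h_j)$. Since $2\,\real(e^{i\beta t_1}f_j)$ and $2\,\real(e^{i\beta(t_1+\tau)}f_j)$ are the two components of $M(f_j,\ov{f_j})^\top$, with $M$ the $2\times2$ matrix in the statement, and since $\det M=-2i\sin(\beta\tau)\neq0$ precisely when $\tau\notin\tfrac{\pi}{\beta}\Z$, the inverse $M^{-1}$ is bounded uniformly in $t$. I then prove \eqref{14-0} by induction on $j$. For $j=1$ the self-interaction sums are empty, $W_{1,H}=F_{1,H}$, $\widetilde W_{1,H}=\widetilde F_{1,H}$, and $M\big((\tilde f_1,\ov{\tilde f_1})^\top-(f_1,\ov{f_1})^\top\big)=(F_{1,H}-h_1,\widetilde F_{1,H}-\widetilde h_1)^\top=O(t^{-n})$. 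For the step, the hypothesis $\tilde f_\ell=f_\ell+O(t^{\ell-n-1})$ for $\ell<j$ gives $\sum_{\ell<j}\tilde f_\ell\ov{\tilde f_{j-\ell}}=\sum_{\ell<j}f_\ell\ov{f_{j-\ell}}+O(t^{j-n-2})$; combined with $F_{j,H}=h_j+O(t^{j-n-1})$, $\widetilde F_{j,H}=\widetilde h_j+O(t^{j-n-1})$ and the identity $h_j-\sum_{\ell<j}f_\ell\ov{f_{j-\ell}}=2\,\real(e^{i\beta t_1}f_j)$ (and its shifted analogue), this yields $M\big((\tilde f_j,\ov{\tilde f_j})^\top-(f_j,\ov{f_j})^\top\big)=O(t^{j-n-1})$, and inverting $M$ closes the induction; keeping track that $W_{j,H},\widetilde W_{j,H}$ stay real also shows that the second component is the conjugate of the first, so the notation is consistent.

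The hard part will be the bookkeeping of error orders through the Vandermonde inversion and the recursion: one must verify that the $|u_H|^2$ self-interaction feeds only the lower-index coefficients $f_1,\dots,f_{j-1}$ (so that the scheme is genuinely triangular) and that the error inherited from that step is one power of $t$ smaller than required, so the claimed accuracy $O(1/t^{n+1-j})$ in \eqref{14-0} survives at every level. A secondary technical point is to make precise the choice of the shifts $c_j$ — equivalently, the sequence $t\to\infty$ along which $t_j-t_1\in\tfrac{2\pi}{\beta}\Z$ — which is what turns the asymptotic expansion into an honest Vandermonde system; the auxiliary shift $\tau$ with $\tau\notin\{\ell\pi/(k(1-\hat x\cdot d)):\ell\in\Z\}$ serves only to supply the second independent linear relation needed to reconstruct $f_j$ from its two real parts.
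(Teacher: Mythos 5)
Your proposal is correct and follows essentially the same route as the paper's proof: expand $|x|\,[|u(x,d)|^2-1]$ via the Atkinson expansion of $u_H$ (the $u_M$ contribution being exponentially small), use the lattice condition on $t_j-t_1$ to collapse the oscillatory factors so that Vandermonde inversion yields $F_{j,H}=F_j(t_1\hat x)+O(t^{j-1-n})$, and then recover $f_j$ recursively by subtracting the self-interaction sums and inverting the $2\times 2$ matrix, whose invertibility is exactly the condition $\tau\notin\{\ell\pi/(k(1-\hat x\cdot d)):\ell\in\Z\}$. Your explicit determinant computation and the induction-based error bookkeeping ($O(t^{j-n-2})$ from the products of already-recovered coefficients versus $O(t^{j-n-1})$ from the Vandermonde step) just make precise what the paper states by recursion.
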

\begin{proof}
It follows from \eqref{2.3-2} and \eqref{AWH} that
\ben
&&|x|[|u(x,d)|^2-1]=|x|[e^{-ikx\cdot d}u_H(x,d)+e^{ikx\cdot d}\ov{u_H(x,d)}+|u_H(x,d)|^2]+O(e^{-k|x|})\\
&&\qquad\qquad\qquad\quad\;\;=\sum_{j=1}^{n}\frac{F_j(x)}{|x|^{j-1}}+O\left(\frac{1}{|x|^n}\right),\quad|x|\to\infty,
\enn
where
\be\label{14-}
&&F_1(x)=e^{ik|x|(1-\hat x\cdot d)}f_1(\hat x)+e^{ik|x|(\hat x\cdot d-1)}\ov{f_1(\hat x)},\\\label{14+}
&&F_j(x)=e^{ik|x|(1-\hat x\cdot d)}f_j(\hat x)+e^{ik|x|(\hat x\cdot d-1)}\ov{f_j(\hat x)}+\sum_{\ell=1}^{j-1}f_\ell(\hat x)\ov{f_{j-\ell}(\hat x)},\quad j=2,\cdots,n.
\en
Due to the assumption on $t_j$ and $\tilde t_j\text{ }(j=1,2,\cdots,n)$, it holds for sufficiently large $t$ that
\ben
F_\jmath(t_j\hat x)=F_\jmath(t_1\hat x),\quad F_\jmath(\tilde t_j\hat x)=F_\jmath(\tilde t_1\hat x),\quad j,\jmath=1,2,\cdots,n.
\enn
Analogous to \eqref{7-1}, we deduce from \eqref{14-1} and \eqref{14-2} that for $j=1,2,\cdots,n$ it holds
\ben
F_j(t_1\hat x)=F_{j,H}(\hat x,t_1,t_2,\cdots,t_n)+O(t^{j-1-n}),\\
F_j(\tilde t_1\hat x)=\widetilde F_{j,H}(\hat x,\tilde t_1,\tilde t_2,\cdots,\tilde t_n)+O(\tilde t^{j-1-n})
\enn
as $t\to\infty$.
Now, \eqref{14-0} with $j=1$ follows from \eqref{14-} and the assumption on $\tau$.
Moreover, we deduce from \eqref{14-0} with $j=1$ that for $j=2$ it holds
\ben
W_{j,H}=F_j(t_1\hat x)-\sum_{\ell=1}^{j-1}f_\ell(\hat x)\ov{f_{j-\ell}(\hat x)}+O(t^{j-1-n}),\\
\widetilde W_{j,H}=F_j(\tilde t_1\hat x)-\sum_{\ell=1}^{j-1}f_\ell(\hat x)\ov{f_{j-\ell}(\hat x)}+O(t^{j-1-n}),
\enn
as $t\to\infty$.
By recursion, \eqref{14-0} with $j=2,\cdots,n$ follows from \eqref{14+} and the assumption on $\tau$.
\end{proof}
Now suppose that the phased far-field pattern $u^\infty_H=f_1$ and $f_j$ for $j=2,\cdots,n$ have been numerically approximated via \eqref{14-0}.
Analogously to Remark \ref{20-1} (1), the multipoint formula for $u_H$ along the direction of $\hat x$ follows from \eqref{AWH}.
Moreover, analogously to Remark \ref{20-1} (3), $\{u_H(x):x\in\R^m\ba\ov{\Om}\}$ can be approximately calculated from the knowledge of $u_H^\infty(\hat x)=f_1(\hat x)$ at several points of $\hat x$ on $\Sp^{m-1}$.
Since $\frac{2\pi}{k(1-\hat x\cdot d)}$ and the parameters $t_2,\cdots,t_n,\tau$ in Theorem \ref{4.2} need to be very large when $\hat x\cdot d$ is close to $1$, it is difficult to approximately calculate $\{f_1(\hat x):|\hat x-d|\leq\delta\}$ for some small $\delta>0$ by \eqref{14-0}.
In this case, one may consider the limited aperture far-field equation
\be\label{-2'}
(K_{k,\pa\Om}^{\infty}-i\eta S_{k,\pa\Om}^{\infty})\varphi_H=u_H^{\infty}\quad\text{on }\Sp_0^{m-1}:=\{\hat x\in\Sp^{m-1}:|\hat x-d|>\delta\},
\en
instead of the full aperture far-field equation \eqref{-2}.
By the analyticity of $u_H^{\infty}(\hat x)$ in $\hat x\in\Sp^{m-1}$, there also exists a unique density $\varphi_H\in C(\pa\Om)$ satisfies \eqref{-2'}.
Analogously to \eqref{-3}, \eqref{-2'} can also be solved via Tikhonov regularization, that is,
\ben
\varphi_H\!\!\approx\!\!\{\alpha I\!\!+\!\![(K_{k,\pa\Om}^{\infty}\!\!-\!\!i\eta S_{k,\pa\Om}^{\infty})|_{\Sp_0^{m-1}}]^{*}(K_{k,\pa\Om}^{\infty}\!\!-\!\!i\eta S_{k,\pa\Om}^{\infty})|_{\Sp_0^{m-1}}\}^{-1}[(K_{k,\pa\Om}^{\infty}\!\!-\!\!i\eta S_{k,\pa\Om}^{\infty})|_{\Sp_0^{m-1}}]^*u_H^{\infty}|_{\Sp_0^{m-1}},
\enn
where $\alpha>0$ is a proper regularization parameter (cf. \cite[Section 4.4]{CK19}).
Therefore, $\{u_H(x):x\in\R^m\ba\ov{\Om}\}$ can be approximated by the discrete form of \eqref{-4}.
In the sequel, we will derive phase retrieval formulas of $u_M^\infty$ under the assumption that $\{u_H(x):x\in\R^m\ba\ov{\Om}\}$ has been recovered.

As a reformulation of Step 2 in the proof of Theorem \ref{3.1}, we have the following theorem.
\begin{theorem}[Two-points phase retrieval formula of $u_M^\infty$ when $m=3$]\label{4.3}
For an arbitrarily fixed $\hat x\in\Sp^2$ such that $\hat x\cdot d\neq0$ we have
\be\label{20-2}
\left(u_M^\infty(\hat x,d),\ov{u_M^\infty(\hat x,d)}\right)^\top=\left(\begin{array}{cc}
e^{-ikt\hat x\cdot d} & e^{ikt\hat x\cdot d}\\
e^{-ik(t+\tau)\hat x\cdot d} & e^{ik(t+\tau)\hat x\cdot d}
\end{array}\right)^{-1}\left(\begin{array}{c}
v(t\hat x,d)\\
v((t+\tau)\hat x,d)
\end{array}\right)+O\left(\frac1t\right),
\en
as $t\to\infty$, where the real constant $\tau\notin\left\{\frac{\ell\pi}{k\hat x\cdot d}:\ell\in\Z\right\}$ and $v(x,d):=|x|e^{k|x|}\{|u(x,d)|^2-|e^{ikx\cdot d}+u_H(x,d)|^2\}$.
\end{theorem}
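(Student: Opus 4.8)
\textbf{Proof proposal for Theorem \ref{4.3}.}
The plan is to deduce \eqref{20-2} by specializing to $m=3$ the asymptotic identity \eqref{20-3} that was already established in Step 2 of the proof of Theorem \ref{3.1}, and then solving a $2\times2$ linear system for the pair $(u_M^\infty(\hat x,d),\ov{u_M^\infty(\hat x,d)})$. Recall that \eqref{20-3} states, with $v(x,d)=|x|^{(m-1)/2}e^{k|x|}\{|u(x,d)|^2-|e^{ikx\cdot d}+u_H(x,d)|^2\}$,
\[
v(x,d)=e^{-ikx\cdot d}u_M^\infty(\hat x,d)+e^{ikx\cdot d}\,\ov{u_M^\infty(\hat x,d)}+O\big(|x|^{(1-m)/2}\big),\qquad|x|\to\infty ,
\]
so that for $m=3$ the remainder is $O(1/|x|)$ and $v$ is precisely the function appearing in the statement. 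Evaluating this relation along the ray through $\hat x$ at the two radii $|x|=t$ and $|x|=t+\tau$ — the $O$-terms being uniform in $t$ since $\tau$ is a fixed constant — gives
\[
\left(\begin{array}{cc}
e^{-ikt\hat x\cdot d} & e^{ikt\hat x\cdot d}\\
e^{-ik(t+\tau)\hat x\cdot d} & e^{ik(t+\tau)\hat x\cdot d}
\end{array}\right)\left(\begin{array}{c}
u_M^\infty(\hat x,d)\\
\ov{u_M^\infty(\hat x,d)}
\end{array}\right)=\left(\begin{array}{c}
v(t\hat x,d)\\
v((t+\tau)\hat x,d)
\end{array}\right)+\left(\begin{array}{c}
O(1/t)\\
O(1/t)
\end{array}\right),\qquad t\to\infty .
\]

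The determinant of the coefficient matrix equals $e^{ik\tau\hat x\cdot d}-e^{-ik\tau\hat x\cdot d}=2i\sin(k\tau\hat x\cdot d)$, which is nonzero exactly because $\hat x\cdot d\neq0$ and $\tau\notin\{\ell\pi/(k\hat x\cdot d):\ell\in\Z\}$. Since the matrix, its inverse, and the entries of its inverse are all bounded uniformly in $t$ (the adjugate entries have modulus $1$ and the determinant is a fixed nonzero number), multiplying both sides on the left by the inverse does not change the order of the remainder. This yields exactly \eqref{20-2}.

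There is no genuine obstacle here: once \eqref{20-3} is in hand, Theorem \ref{4.3} is essentially a repackaging of it together with the invertibility condition on $\tau$. The only mild points to check are (i) that the $O(1/|x|)$ error in \eqref{20-3} is uniform over the two-point set $\{t\hat x,(t+\tau)\hat x\}$ as $t\to\infty$, which is immediate because $\tau$ is held fixed; and (ii) that the right-hand side is actually computable — forming $v(x,d)$ requires $u_H(x,d)$ near the two evaluation points, but by the discussion following Theorem \ref{4.2} the Helmholtz part $u_H$ (hence $u_M=u^s-u_H$, and therefore $v$) is assumed already recovered, so this causes no difficulty.
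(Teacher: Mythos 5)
Your proposal is correct and follows exactly the route of the paper, whose proof of Theorem \ref{4.3} is the one-line observation that \eqref{20-2} follows from \eqref{20-3} and the assumption on $\tau$; you simply spell out the $2\times2$ system, the determinant $2i\sin(k\tau\hat x\cdot d)\neq0$, and the uniformity of the remainder, all of which are accurate.
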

\begin{proof}
In view of \eqref{20-3}, \eqref{20-2} follows directly from the assumption on $\tau$.
\end{proof}

To obtain the asymptotic formula for phase retrieval of $u_M^\infty$, we need the following lemma.
\begin{lemma}\label{4.4}
Assume $\hat x\cdot d\neq0$.
For any $\delta>0$ there exists $R>0$ such that if $r>R$ then
\ben
kr\hat x\cdot d\in\{\arg\{e^{ikt\hat x\cdot d}+u_H(t\hat x,d)\}:t\in(r-\delta,r+\delta)\}.
\enn
\end{lemma}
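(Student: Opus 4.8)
The plan is to factor out the dominant oscillation $e^{ikt\hat x\cdot d}$ and then invoke the intermediate value theorem. Since $D$ is bounded, the ray $\{t\hat x:t>R_0\}$ lies in $\R^m\ba\ov{D}$ for some $R_0>0$, so $u_H(t\hat x,d)$ is defined and real-analytic in $t$ on $(R_0,\infty)$. First I would write
\ben
e^{ikt\hat x\cdot d}+u_H(t\hat x,d)=e^{ikt\hat x\cdot d}\,g(t),\qquad g(t):=1+e^{-ikt\hat x\cdot d}\,u_H(t\hat x,d),
\enn
and note that, by Lemma \ref{2.3}, $|e^{-ikt\hat x\cdot d}u_H(t\hat x,d)|=|u_H(t\hat x,d)|=O(|t|^{-(m-1)/2})\to0$ as $t\to\infty$. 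Hence there is $R_1\ge R_0$ with $|g(t)-1|<1/2$, in particular $\real g(t)>1/2$ and $e^{ikt\hat x\cdot d}+u_H(t\hat x,d)\neq0$, for all $t>R_1$; consequently the principal branch $\arg g(t)$ is well defined and continuous on $(R_1,\infty)$ and satisfies $\arg g(t)\to0$ as $t\to\infty$.

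Next I would set $h(t):=kt\hat x\cdot d+\arg g(t)$, a continuous real-valued function on $(R_1,\infty)$ with $h(t)\in\arg\{e^{ikt\hat x\cdot d}+u_H(t\hat x,d)\}$ for every $t>R_1$, since the set of arguments of a nonzero complex number is precisely the coset of $2\pi\Z$ it determines. Fix $\delta>0$ and choose $R\ge R_1+\delta$ so large that $|\arg g(t)|<\tfrac{1}{4}k\delta\,|\hat x\cdot d|$ for all $t>R-\delta$; this is possible because $\arg g(t)\to0$ and $\hat x\cdot d\neq0$. Then for any $r>R$,
\ben
h\big(r-\tfrac{\delta}{2}\big)=kr\hat x\cdot d-\tfrac{k\delta}{2}\hat x\cdot d+\arg g\big(r-\tfrac{\delta}{2}\big),\qquad h\big(r+\tfrac{\delta}{2}\big)=kr\hat x\cdot d+\tfrac{k\delta}{2}\hat x\cdot d+\arg g\big(r+\tfrac{\delta}{2}\big),
\enn
so $kr\hat x\cdot d$ lies strictly between $h(r-\tfrac{\delta}{2})$ and $h(r+\tfrac{\delta}{2})$ (whatever the sign of $\hat x\cdot d$, since the $\pm\tfrac{k\delta}{2}\hat x\cdot d$ terms dominate the small $\arg g$ terms). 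By continuity of $h$ and the intermediate value theorem there exists $t_*\in(r-\tfrac{\delta}{2},r+\tfrac{\delta}{2})\subset(r-\delta,r+\delta)$ with $h(t_*)=kr\hat x\cdot d$, and since $h(t_*)\in\arg\{e^{ikt_*\hat x\cdot d}+u_H(t_*\hat x,d)\}$ this is the assertion with this choice of $R$.

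There is no deep difficulty here; the two points requiring care are (i) passing to a single-valued continuous branch of the argument, which is why one must first divide by $e^{ikt\hat x\cdot d}$ and work with $g(t)$ in a neighbourhood of $1$, and (ii) applying the intermediate value theorem uniformly in the sign of $\hat x\cdot d$, handled by phrasing the conclusion as ``$kr\hat x\cdot d$ lies between the two endpoint values of $h$''. The only analytic input is the decay $|u_H(t\hat x,d)|\to0$ from Lemma \ref{2.3}.
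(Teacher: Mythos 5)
Your proof is correct and is a careful, fully detailed version of the argument the paper only sketches in one line (the paper simply says the lemma follows from the decay in \eqref{2.3-1} and the continuity of $e^{ikt\hat x\cdot d}+u_H(t\hat x,d)$ in $t$). Your factorization through $g(t)=1+e^{-ikt\hat x\cdot d}u_H(t\hat x,d)$, the continuous branch $h(t)=kt\hat x\cdot d+\arg g(t)$, and the intermediate value theorem make precise exactly what that sketch intends.
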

\begin{proof}
In view of \eqref{2.3-1}, this lemma follows from the continuity of $e^{ikt\hat x\cdot d}+u_H(t\hat x,d)$ in $t$.
\end{proof}
\begin{theorem}[Multipoint phase retrieval formula of $u_M^\infty$ when $m=3$]\label{4.5}
Assume that $\delta,\sigma_1,\sigma_2,\cdots,\sigma_n$ are distinct positive constants.
For an arbitrarily fixed $\hat x\in\Sp^2$ such that $\hat x\cdot d\neq0$ we have
\be\label{21-3}
\left(g_j(\hat x),\ov{g_j(\hat x)}\right)^\top=\left(\begin{array}{cc}
e^{-ik\sigma_1t\hat x\cdot d} & e^{ik\sigma_1t\hat x\cdot d}\\
e^{-ik(\sigma_1t+\tau)\hat x\cdot d} & e^{ik(\sigma_1t+\tau)\hat x\cdot d}
\end{array}\right)^{-1}\left(\begin{array}{c}
G_{j,M}\\
\widetilde G_{j,M}
\end{array}\right)+O\left(\frac1{t^{n+1-j}}\right),
\en
as $t\to\infty$, $j=1,2,\cdots,n$, with the real constant $\tau\notin\{\frac{\ell\pi}{k\hat x\cdot d}:\ell\in\Z\}$.
Here, the functions $G_{j,M}=G_{j,M}(\hat x,t_1,t_2,\cdots,t_n)$ and $\widetilde G_{j,M}=\widetilde G_{j,M}(\hat x,\tilde t_1,\tilde t_2,\cdots,\tilde t_n)$ are given by
\be\label{21-1}
\left(\begin{array}{c}
G_{1,M}\\
G_{2,M}\\
\vdots\\
G_{n,M}
\end{array}\right)=\left(\begin{array}{cccc}
1 & \frac{1}{t_{1}} & \cdots & \frac{1}{t_{1}^{n-1}}\\
1 & \frac{1}{t_{2}} & \cdots & \frac{1}{t_{2}^{n-1}}\\
\vdots & \vdots & \ddots & \vdots  \\
1 & \frac{1}{t_{n}} & \cdots & \frac{1}{t_{n}^{n-1}}\\
\end{array}\right)^{-1}\left(\begin{array}{c}
w(t_1\hat x,d)\\
w(t_2\hat x,d)\\
\vdots\\
w(t_n\hat x,d)\\
\end{array}\right),\\\label{21-2}
\left(\begin{array}{c}
\widetilde G_{1,M}\\
\widetilde G_{2,M}\\
\vdots\\
\widetilde G_{n,M}
\end{array}\right)=\left(\begin{array}{cccc}
1 & \frac{1}{\tilde t_{1}} & \cdots & \frac{1}{\tilde t_{1}^{n-1}}\\
1 & \frac{1}{\tilde t_{2}} & \cdots & \frac{1}{\tilde t_{2}^{n-1}}\\
\vdots & \vdots & \ddots & \vdots \\
1 & \frac{1}{\tilde t_{n}} & \cdots & \frac{1}{\tilde t_{n}^{n-1}}\\
\end{array}\right)^{-1}\left(\begin{array}{c}
w(\tilde t_1\hat x,d)\\
w(\tilde t_2\hat x,d)\\
\vdots\\
w(\tilde t_n\hat x,d)\\
\end{array}\right),
\en
respectively, where $w(x,d):=|x|e^{k|x|}\{|u(x,d)|^2-|e^{ikx\cdot d}+u_H(x,d)|^2\}/|e^{ikx\cdot d}+u_H(x,d)|$.
Here, $t_j=\sigma_jt+b_j$ with $b_1\in(-\delta,\delta)$ and $b_j\in\left(-\delta,\frac{2\pi}{k\hat x\cdot d}+\delta\right)$ for $j\geq2$ such that $\arg\{e^{ikt_j\hat x\cdot d}+u_H(t_j\hat x,d)\}=k\sigma_1t\hat x\cdot d$ for each sufficiently large $t$ and $j\in\{1,2,\cdots,n\}$, and $\tilde t_j=\sigma_jt+\tilde b_j$ with $\tilde b_1\in(\tau-\delta,\tau+\delta)$ and $\tilde b_j\in\left(-\delta,\frac{2\pi}{k\hat x\cdot d}+\delta\right)$ for $j\geq2$ such that $\arg\{e^{ik\tilde t_j\hat x\cdot d}+u_H(\tilde t_j\hat x,d)\}=k(\sigma_1t+\tau)\hat x\cdot d$ for each sufficiently large $t$ and $j\in\{1,2,\cdots,n\}$.
\end{theorem}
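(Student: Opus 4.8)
The plan is to follow the pattern of Theorems~\ref{4.2} and \ref{4.3}: combine the Atkinson expansion \eqref{AWM} of $u_M$ to order $n$ with the phase-adjusted sampling points furnished by Lemma~\ref{4.4}. Write $U(x):=e^{ikx\cdot d}+u_H(x,d)$ and $\phi(x):=\arg U(x)$; since $u_H(\cdot,d)\to 0$ at infinity we have $|U(x)|\geq 1/2$ for $|x|$ large, so $w(x,d)$ and $\phi(x)$ are well defined there and, $u_H(\cdot,d)$ being assumed known, both are computable from the phaseless datum $|u(x,d)|$. The first step is an asymptotic expansion of $w$. Writing $u=U+u_M$ gives $|u|^2-|U|^2=2\,{\rm Re}(\ov{U}u_M)+|u_M|^2$, and since $\ov{U}/|U|=e^{-i\phi}$ this is the exact identity $(|u|^2-|U|^2)/|U|=2\,{\rm Re}(e^{-i\phi}u_M)+|u_M|^2/|U|$. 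Multiplying by $|x|e^{k|x|}$ (here $m=3$, so $(m-1)/2=1$), inserting \eqref{AWM} with $N=n$ as $|x|e^{k|x|}u_M(x,d)=\sum_{\ell=1}^{n}g_\ell(\hat x)|x|^{1-\ell}+O(|x|^{-n})$, and noting that by \eqref{2.3-2} the leftover $|x|e^{k|x|}|u_M|^2/|U|$ is $O(e^{-k|x|}/|x|)$, one obtains
\[
w(x,d)=e^{-i\phi(x)}\sum_{\ell=1}^{n}\frac{g_\ell(\hat x)}{|x|^{\ell-1}}+e^{i\phi(x)}\sum_{\ell=1}^{n}\frac{\ov{g_\ell(\hat x)}}{|x|^{\ell-1}}+O\bigl(|x|^{-n}\bigr),\qquad|x|\to\infty .
\]

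Next I choose the sampling points via Lemma~\ref{4.4}. For large $t$, taking $r=\sigma_1 t$ gives $t_1=\sigma_1 t+b_1$ with $b_1\in(-\delta,\delta)$ and $\phi(t_1\hat x)=k\sigma_1 t\,\hat x\cdot d$; for $j\geq 2$ I first pick $r_j$ in the length-$\tfrac{2\pi}{k\hat x\cdot d}$ window starting at $\sigma_j t$ with $kr_j\hat x\cdot d\equiv k\sigma_1 t\,\hat x\cdot d\pmod{2\pi}$, then apply Lemma~\ref{4.4} with this $r_j$ to obtain $t_j=\sigma_j t+b_j$, $b_j\in(-\delta,\tfrac{2\pi}{k\hat x\cdot d}+\delta)$, still with $\phi(t_j\hat x)=k\sigma_1 t\,\hat x\cdot d$; the $\tilde t_j$ are produced the same way with $\sigma_1 t$ replaced by $\sigma_1 t+\tau$. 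Because the $\sigma_j$ are distinct and the shifts bounded, $t_1,\dots,t_n$ (resp.\ $\tilde t_1,\dots,\tilde t_n$) are pairwise distinct for $t$ large, so the Vandermonde matrices in \eqref{21-1}--\eqref{21-2} are invertible. Evaluating the displayed expansion at $x=t_j\hat x$ and using $\phi(t_j\hat x)=k\sigma_1 t\,\hat x\cdot d$ gives $w(t_j\hat x,d)=\sum_{\ell=1}^{n}\bigl[e^{-ik\sigma_1 t\hat x\cdot d}g_\ell(\hat x)+e^{ik\sigma_1 t\hat x\cdot d}\ov{g_\ell(\hat x)}\bigr]t_j^{1-\ell}+O(t^{-n})$, and likewise for $\tilde t_j$; solving \eqref{21-1}--\eqref{21-2} and using, as in the proof of Lemma~\ref{4.1}, that the inverse of the Vandermonde matrix in the variables $1/t_1,\dots,1/t_n$ has $j$-th row of order $t^{j-1}$, I get $G_{j,M}=e^{-ik\sigma_1 t\hat x\cdot d}g_j(\hat x)+e^{ik\sigma_1 t\hat x\cdot d}\ov{g_j(\hat x)}+O(t^{j-1-n})$ and the same with $\sigma_1 t\mapsto\sigma_1 t+\tau$ for $\widetilde G_{j,M}$.

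These two identities are a $2\times 2$ linear system for $(g_j(\hat x),\ov{g_j(\hat x)})$ with exactly the coefficient matrix in \eqref{21-3}; its determinant equals $2i\sin(k\tau\,\hat x\cdot d)$, which is nonzero precisely because $\tau\notin\{\ell\pi/(k\hat x\cdot d):\ell\in\Z\}$, so its inverse is bounded uniformly in $t$ and \eqref{21-3} follows with the stated error $O(t^{-(n+1-j)})$. The conceptual heart of the argument, and the step I expect to cost the most care, is the realization that the normalizing factor $1/|e^{ikx\cdot d}+u_H(x,d)|$ in the definition of $w$ turns the coefficient of every $g_\ell(\hat x)$ into the pure phase $e^{-i\phi(x)}$, which is exactly what lets Lemma~\ref{4.4} force that phase to the common value $k\sigma_1 t\,\hat x\cdot d$ at all $n$ points at once while keeping them distinct; without this normalization (as for the raw quantity $v$ of Theorem~\ref{4.3}) the coefficients would carry the varying modulus $|U(t_j\hat x)|$ and the Vandermonde structure would collapse once $n\geq 2$. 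The remaining delicate points are purely bookkeeping: the $\pmod{2\pi}$ choice of the $r_j$ so that the shifts $b_j$ land in the prescribed windows, and the propagation of the $O(t^{-n})$ remainder through the inverse Vandermonde matrix to yield the $j$-dependent error $O(t^{j-1-n})$.
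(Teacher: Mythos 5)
Your proposal is correct and follows essentially the same route as the paper: expand $w$ via the Atkinson expansion \eqref{AWM} with the coefficient of each $g_\ell$ reduced to the pure phase $e^{-i\arg\{e^{ikx\cdot d}+u_H(x,d)\}}$, use Lemma \ref{4.4} to pin that phase to $k\sigma_1 t\,\hat x\cdot d$ (resp.\ $k(\sigma_1 t+\tau)\hat x\cdot d$) at all $n$ sampling points, invert the Vandermonde systems as in Lemma \ref{4.1}, and close with the $2\times2$ system whose determinant is nonzero by the assumption on $\tau$. Your identity $w(x,d)=e^{-i\phi(x)}\sum_\ell g_\ell|x|^{1-\ell}+e^{i\phi(x)}\sum_\ell \ov{g_\ell}|x|^{1-\ell}+O(|x|^{-n})$ is exactly the paper's $G_j(x)=2{\rm Re}[(e^{ikx\cdot d}+u_H)\ov{g_j}]/|e^{ikx\cdot d}+u_H|$, and you merely supply more detail than the paper on the mod-$2\pi$ selection of the $b_j$ and on why the normalization by $|e^{ikx\cdot d}+u_H|$ is essential.
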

\begin{proof}
We first conclude from Lemma \ref{4.4} that there exist parameters $b_j$ and $\tilde b_j$ satisfy the assumptions in Theorem \ref{4.5} for $j\in\{1,2,\cdots,n\}$.
Now we prove \eqref{21-3}.
It follows from \eqref{AWM} and $u(x,d)=e^{ikx\cdot d}+u_H(x,d)+u_M(x,d)$ that
\ben
w(x,d)=\sum_{j=1}^{n}\frac{G_j(x)}{|x|^{j-1}}+O\left(\frac{1}{|x|^n}\right),
\enn
where for $j=1,2,\cdots,n$ the function $G_j$ is given by
\ben
G_j(x)=2{\rm Re}[(e^{ikx\cdot d}+u_H(x,d))\ov{g_j(\hat x)}]/|e^{ikx\cdot d}+u_H(x,d)|.
\enn
Due to the assumption on $t_j$ and $\tilde t_j(j=1,2,\cdots,n)$, it holds for sufficiently large $t$ that
\ben
G_\jmath(t_j\hat x)=G_\jmath(t_1\hat x),\quad G_\jmath(\tilde t_j\hat x)=G_\jmath(\tilde t_1\hat x),\quad j,\jmath=1,2,\cdots,n.
\enn
Analogously to \eqref{7-2}, we deduce from \eqref{21-1} and \eqref{21-2} that for $j=1,2,\cdots,n$ it holds
\ben
2{\rm Re}[e^{ik\sigma_1t\cdot d}\ov{g_j(\hat x)}]=G_j(t_1\hat x)=G_{j,M}(\hat x,t_1,t_2,\cdots,t_n)+O(t^{j-1-n}),\\
2{\rm Re}[e^{ik(\sigma_1t+\tau)\cdot d}\ov{g_j(\hat x)}]=G_j(\tilde t_1\hat x)=\widetilde G_{j,M}(\hat x,\tilde t_1,\tilde t_2,\cdots,\tilde t_n)+O(t^{j-1-n})
\enn
as $t\to\infty$.
Now, \eqref{21-3} follows from the assumption on $\tau$.
\end{proof}



\begin{remark}
(1) \eqref{20-2} is actually a special form of \eqref{21-3} for $n=1$.

(2) The approximation error of $u_H$ will be amplified by the factors $|x|e^{k|x|}$ in \eqref{20-2}, \eqref{21-1} and \eqref{21-2} as $|x|\to\infty$.
\end{remark}
Now suppose that the phased far-field pattern $u^\infty_M=g_1$ and $g_j$ for $j=2,\cdots,n$ have been numerically approximated via \eqref{21-3}.
Analogously to Remark \ref{20-1} (1), the multipoint formula for $u_M$ along the direction of $\hat x$ follows from \eqref{AWM}.
Moreover, analogously to Remark \ref{20-1} (3), $\{u_M(x):x\in\R^m\ba\ov{\Om}\}$ can be approximately calculated from the knowledge of $u_M^\infty(\hat x)=f_1(\hat x)$ at several points of $\hat x$ on $\Sp^{m-1}$.
Since $\frac{2\pi}{k\hat x\cdot d}$ and the parameters $t_2,\cdots,t_n,\tau$ in Theorem \ref{4.5} need to be very large when $\hat x\cdot d$ is close to $0$, it is difficult to approximately calculate $\{g_1(\hat x):|\hat x\cdot d|\leq\delta\}$ for some small $\delta>0$ by \eqref{20-2} or \eqref{21-3}.
In this case, one may consider the limited aperture far-field equation
\be\label{-5'}
(K_{ik,\pa\Om}^{\infty}-i\eta S_{ik,\pa\Om}^{\infty})\varphi_M=u_M^{\infty}\quad\text{on }\{\hat x\in\Sp^{m-1}:|\hat x\cdot d|>\delta\},
\en
instead of the full aperture far-field equation \eqref{-5}.
Analogously to \eqref{-2'}, \eqref{-5'} is also uniquely solvable and can be solved via Tikhonov regularization, and $\{u_M(x):x\in\R^m\ba\ov{\Om}\}$ can be approximately reconstructed via \eqref{-6}.

\subsection{Two-dimensional case}

We continue with the asymptotic formulas for phase retrieval in two-dimensional case.
As a reformulation of Step 1 in the proof of Theorem \ref{3.1}, we have the following theorem.

\begin{theorem}[Two-points phase retrieval formula of $u_H^\infty$ when $m=2$]\label{thm22}
For an arbitrarily fixed $\hat x\in\Sp^1$ such that $\hat x\neq d$ we have
\ben
\left(u_H^\infty(\hat x,d),\ov{u_H^\infty(\hat x,d)}\right)^\top=\!\!\left(\!\!\!\begin{array}{cc}
 e^{itk(1-\hat x\cdot d)} & e^{itk(\hat x\cdot d-1)}\\
 e^{i(t+\tau)k(1-\hat x\cdot d)} & e^{i(t+\tau)k(\hat x\cdot d-1)}\\
 \end{array}\!\!\!\right)^{-1}\!\!\left(\begin{array}{c}
v(t\hat x,d)\\
v((t+\tau)\hat x,d)
\end{array}\right)
+\!O\left(\!\frac1{t^{1/2}}\!\right)
\enn
as $t\to\infty$, where the real constant $\tau\notin\left\{\frac{\ell\pi}{k(1-\hat x\cdot d)}:\ell\in\Z\right\}$ and $v(x,d):=|x|^{1/2}\{|u(x,d)|^2\!-\!1\}$.
\end{theorem}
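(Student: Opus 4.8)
The plan is to reduce this two-dimensional statement to exactly the same linear-algebra argument used in Step~1 of the proof of Theorem~\ref{3.1}, but organized so that only two sampling points are needed. First I would recall from \eqref{2.3-1} and \eqref{2.3-2} that, for $m=2$,
\[
u(x,d)=e^{ikx\cdot d}+|x|^{-1/2}e^{ik|x|}u_H^\infty(\hat x,d)+O(|x|^{-1}),\qquad |x|\to\infty,
\]
since the $u_M$ contribution decays like $e^{-k|x|}$ and is therefore absorbed into the error. Squaring and isolating the cross term gives
\[
v(x,d)=|x|^{1/2}\bigl(|u(x,d)|^2-1\bigr)
=e^{ik|x|(1-\hat x\cdot d)}u_H^\infty(\hat x,d)+e^{ik|x|(\hat x\cdot d-1)}\overline{u_H^\infty(\hat x,d)}+O(|x|^{-1/2}),
\]
the $|u_H|^2$ term being $O(|x|^{-1})$ and hence lower order. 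This is the two-dimensional analogue of \eqref{20-3} with the roles of Helmholtz and modified Helmholtz interchanged.

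Next I would evaluate this identity at the two points $x=t\hat x$ and $x=(t+\tau)\hat x$. Writing $\zeta=e^{ikt(1-\hat x\cdot d)}$ and noting that the second sample contributes the factor $e^{ik\tau(1-\hat x\cdot d)}$, the two equations can be written as a $2\times2$ linear system
\[
\begin{pmatrix} e^{itk(1-\hat x\cdot d)} & e^{itk(\hat x\cdot d-1)}\\ e^{i(t+\tau)k(1-\hat x\cdot d)} & e^{i(t+\tau)k(\hat x\cdot d-1)}\end{pmatrix}
\begin{pmatrix} u_H^\infty(\hat x,d)\\ \overline{u_H^\infty(\hat x,d)}\end{pmatrix}
=\begin{pmatrix} v(t\hat x,d)\\ v((t+\tau)\hat x,d)\end{pmatrix}+\begin{pmatrix} O(t^{-1/2})\\ O(t^{-1/2})\end{pmatrix}.
\]
The coefficient matrix has determinant $e^{ik\tau(\hat x\cdot d-1)}-e^{ik\tau(1-\hat x\cdot d)}=-2i\sin\bigl(k\tau(1-\hat x\cdot d)\bigr)$, which is nonzero precisely because $\hat x\neq d$ forces $1-\hat x\cdot d\neq0$ and the hypothesis $\tau\notin\{\ell\pi/(k(1-\hat x\cdot d)):\ell\in\Z\}$ rules out the zeros of the sine. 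Moreover the magnitude of the determinant is a positive constant independent of $t$, so the inverse matrix has entries bounded uniformly in $t$; multiplying through by it therefore preserves the $O(t^{-1/2})$ error. Rearranging yields exactly the claimed formula.

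The only genuinely delicate point is bookkeeping of the error term: one must check that after squaring $u=u^i+u_H+u_M$, every contribution other than the two stated cross terms is $O(|x|^{-1/2})$ once the prefactor $|x|^{1/2}$ is applied, and in particular that the modified-Helmholtz part $u_M$, carrying $e^{-k|x|}$, does not interfere — this is where the asymptotics of Lemma~\ref{2.3} are used, and it is routine. A secondary subtlety is that the inverse of the coefficient matrix must be shown to have $t$-independent operator norm, which follows from the explicit determinant computed above together with the fact that all matrix entries have modulus one. No convergence issue of the Atkinson expansion arises here, since only the leading term $f_1=u_H^\infty$ is needed; the multipoint refinement is left to Theorem~\ref{4.10}.
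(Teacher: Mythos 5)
Your proposal is correct and follows essentially the same route as the paper: the paper's proof simply invokes the expansion \eqref{22-1} (derived in Step 1 of Theorem \ref{3.1}), which is exactly the identity $v(x,d)=e^{ik|x|(1-\hat x\cdot d)}u_H^\infty(\hat x,d)+e^{ik|x|(\hat x\cdot d-1)}\ov{u_H^\infty(\hat x,d)}+O(|x|^{-1/2})$ you rederive, and then inverts the same $2\times2$ system using the nonvanishing of $\sin(k\tau(1-\hat x\cdot d))$. Your additional bookkeeping of the error terms and of the $t$-independence of the inverse matrix is accurate and merely makes explicit what the paper leaves implicit.
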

\begin{proof}
In view of \eqref{22-1}, the result follows directly from the assumption on $\tau$.
\end{proof}
Actually, we have the following asymptotic formula with a smaller remainder. A similar two-point phase retrieval formula for acoustic waves has been established (see \cite[Proposition 2.6]{novikov2025}). In contrast, our asymptotic formulas are for biharmonic waves, and we have further derived a recursive multipoint asymptotic formula for phase retrieval.
\begin{theorem}[Revised two-points phase retrieval formula of $u_H^\infty$ when $m=2$]\label{thm22'}
For an arbitrarily fixed $\hat x\in\Sp^1$ such that $\hat x\neq d$ we have
\ben
\left(\!u_H^\infty(\hat x,d),\ov{u_H^\infty(\hat x,d)}\!\right)^\top\!=\!\left(\begin{array}{cc}
 e^{itk(1-\hat x\cdot d)} & e^{itk(\hat x\cdot d-1)}\\
 e^{i(t+\tau)k(1-\hat x\cdot d)} & e^{i(t+\tau)k(\hat x\cdot d-1)}\\
 \end{array}\right)^{-1}\left(\!\!\!\!\begin{array}{c}
W_H(t\hat x,d)\\
W_H((t+\tau)\hat x,d)
\end{array}\!\!\!\!\right)+O\left(\frac1{t}\right)
\enn
as $t\to\infty$, where $W_H(x,d):=|x|^{1/2}[|u(x,d)|^2-1]-|x|^{-1/2}|\tilde f_1(\hat x)|^2$ with $\tilde f_1(\hat x)$ given by
\ben
\left(\tilde f_1(\hat x),\ov{\tilde f_1(\hat x)}\right)^\top=\left(\begin{array}{cc}
 e^{itk(1-\hat x\cdot d)} & e^{itk(\hat x\cdot d-1)}\\
 e^{i(t+\tau)k(1-\hat x\cdot d)} & e^{i(t+\tau)k(\hat x\cdot d-1)}\\
 \end{array}\right)^{-1}\left(\begin{array}{c}
 t^{1/2}[|u(t\hat x,d)|^2\!-\!1]\\
 (t\!+\!\tau)^{1/2}[|u((t\!+\!\tau)\hat x,d)|^2\!-\!1]
 \end{array}\right),
\enn
and the real constant $\tau\notin\left\{\frac{\ell\pi}{k(1-\hat x\cdot d)}:\ell\in\Z\right\}$.
\end{theorem}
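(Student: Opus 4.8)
The plan is to refine the single-term estimate \eqref{22-1} underlying Theorem \ref{thm22} by computing the next order in the asymptotics of $|u(t\hat x,d)|^2$ along a ray and subtracting off its (now explicit) leading error term.

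First I would insert $u(x,d)=e^{ikx\cdot d}+u_H(x,d)+u_M(x,d)$ into $|u(x,d)|^2$ and expand along the ray $x=t\hat x$ with $\hat x\neq d$. Writing $u_H(t\hat x,d)=t^{-1/2}e^{ikt}\big(u_H^\infty(\hat x,d)+O(t^{-1})\big)$ from \eqref{2.3-1} and using $u_M(t\hat x,d)=O(e^{-kt}t^{-1/2})$ from \eqref{2.3-2}, the only non-exponentially-small contributions to $|u|^2-1$ come from $2\,{\rm Re}[e^{-ikt\hat x\cdot d}u_H(t\hat x,d)]$ and $|u_H(t\hat x,d)|^2$, and collecting powers of $t$ gives
\ben
|x|^{1/2}\big(|u(x,d)|^2-1\big)=e^{ikt(1-\hat x\cdot d)}u_H^\infty(\hat x,d)+e^{ikt(\hat x\cdot d-1)}\ov{u_H^\infty(\hat x,d)}+\frac{|u_H^\infty(\hat x,d)|^2}{t^{1/2}}+O\!\left(\frac1t\right),\quad x=t\hat x,
\enn
as $t\to\infty$; the key point, compared with \eqref{22-1}, is that the $|u_H|^2$-term yields an \emph{explicit} summand $|u_H^\infty(\hat x,d)|^2t^{-1/2}$, while the cross term between $u_H^\infty$ and the $O(1/|x|)$ remainder in \eqref{2.3-1} (equivalently the $f_2$-term in \eqref{AWH}) and all terms involving $u_M$ contribute only at order $O(1/t)$.

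Next I would record that the $2\times2$ matrix in the statement has determinant $-2i\sin\!\big(k\tau(1-\hat x\cdot d)\big)$, which is nonzero precisely because $\tau\notin\{\ell\pi/(k(1-\hat x\cdot d)):\ell\in\Z\}$; hence its inverse exists and is bounded uniformly in $t$. Since $\tilde f_1(\hat x)$ is, by definition, the first component of this inverse applied to $\big(t^{1/2}[|u(t\hat x,d)|^2-1],\,(t+\tau)^{1/2}[|u((t+\tau)\hat x,d)|^2-1]\big)^\top$, Theorem \ref{thm22} already gives $\tilde f_1(\hat x)=u_H^\infty(\hat x,d)+O(t^{-1/2})$, and therefore $|\tilde f_1(\hat x)|^2=|u_H^\infty(\hat x,d)|^2+O(t^{-1/2})$. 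Substituting this into $W_H(x,d)=|x|^{1/2}[|u(x,d)|^2-1]-|x|^{-1/2}|\tilde f_1(\hat x)|^2$, the explicit term cancels up to $\big(|u_H^\infty(\hat x,d)|^2-|\tilde f_1(\hat x)|^2\big)t^{-1/2}=O(1/t)$, so that
\ben
W_H(t\hat x,d)=e^{ikt(1-\hat x\cdot d)}u_H^\infty(\hat x,d)+e^{ikt(\hat x\cdot d-1)}\ov{u_H^\infty(\hat x,d)}+O\!\left(\frac1t\right),
\enn
and likewise at $(t+\tau)\hat x$. Multiplying the vector $\big(W_H(t\hat x,d),W_H((t+\tau)\hat x,d)\big)^\top$ by the uniformly bounded inverse matrix then produces $\big(u_H^\infty(\hat x,d),\ov{u_H^\infty(\hat x,d)}\big)^\top$ up to $O(1/t)$, which is the asserted formula.

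I expect the main obstacle to be the bookkeeping in the first step: one must verify that, after multiplication by $|x|^{1/2}$, the \emph{only} term of exact size $t^{-1/2}$ is $|u_H^\infty(\hat x,d)|^2$, and that the $f_2$-type and $u_M$-type contributions are genuinely $O(1/t)$; this is where the sharp remainders in Lemmas \ref{2.3} and \ref{2.6} are used. The mechanism that makes the improvement work is a bootstrap: the crude estimate $\tilde f_1=u_H^\infty+O(t^{-1/2})$ is good enough because $\tilde f_1$ enters $W_H$ only through $\big(|u_H^\infty|^2-|\tilde f_1|^2\big)/t^{1/2}$, which is then $O(1/t)$.
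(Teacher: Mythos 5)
Your proposal is correct and follows essentially the same route as the paper's proof: expand $|u(x,d)|^2-1$ along the ray to exhibit the explicit $|x|^{-1/2}|u_H^\infty(\hat x,d)|^2$ term, use Theorem \ref{thm22} to replace $|u_H^\infty(\hat x,d)|^2$ by $|\tilde f_1(\hat x)|^2$ at a cost of $O(1/t)$, and invert the $2\times2$ system, which is nonsingular by the assumption on $\tau$. Your write-up is in fact slightly more explicit than the paper's (computing the determinant $-2i\sin(k\tau(1-\hat x\cdot d))$ and tracking the $f_2$- and $u_M$-contributions), but no new idea is involved.
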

\begin{proof}
It follows from \eqref{2.3-1}, \eqref{2.3-2} and Theorem \ref{thm22} that
\ben
&&|x|^{1/2}[|u(x,d)|^2-1]=|x|^{1/2}[e^{-ikx\cdot d}u_H(x,d)+e^{ikx\cdot d}\ov{u_H(x,d)}+|u_H(x,d)|^2]+O(e^{-k|x|})\\
&&\qquad\qquad\qquad\qquad\;\;\;=2{\rm Re}[e^{ik|x|(1-\hat x\cdot d)}u_H^\infty(\hat x,d)]+|x|^{-1/2}|u_H^\infty(\hat x,d)|^2+O(e^{-k|x|})\\
&&\qquad\qquad\qquad\qquad\;\;\;=2{\rm Re}[e^{ik|x|(1-\hat x\cdot d)}u_H^\infty(\hat x,d)]+|x|^{-1/2}|\tilde f_1(\hat x)|^2+O((t|x|)^{-1/2}).
\enn
Now the result follows from the assumption on $\tau$.
\end{proof}
Motivated by Theorem \ref{thm22'}, we obtain the following recursive multipoint formula of $u_H^\infty$.

\begin{theorem}[Recursive multipoint phase retrieval formula of $u_H^\infty$ when $m=2$]\label{4.10}
Assume that $t_1,t_2,\cdots,t_n$ and $\tilde t_1,\tilde t_2,\cdots,\tilde t_n$ are the same as in Theorem \ref{4.2}, let
\ben
Q_1=\left(\begin{array}{cccc}
1 & \frac{1}{t_{1}} & \cdots & \frac{1}{t_{1}^{n-1}}\\
1 & \frac{1}{t_{2}} & \cdots & \frac{1}{t_{2}^{n-1}}\\
\vdots & \vdots & \ddots & \vdots  \\
1 & \frac{1}{t_{n}} & \cdots & \frac{1}{t_{n}^{n-1}}\\
\end{array}\right),
Q_2=\left(\begin{array}{cccc}
1 & \frac{1}{\tilde t_{1}} & \cdots & \frac{1}{\tilde t_{1}^{n-1}}\\
1 & \frac{1}{\tilde t_{2}} & \cdots & \frac{1}{\tilde t_{2}^{n-1}}\\
\vdots & \vdots & \ddots & \vdots  \\
1 & \frac{1}{\tilde t_{n}} & \cdots & \frac{1}{\tilde t_{n}^{n-1}}\\
\end{array}\right).
\enn
Suppose that for an arbitrarily fixed $\hat x\in\Sp^1$ such that $\hat x\neq d$ we have obtained from phaseless data $\{|u(x,d)|:|x|=t_1,t_2,\cdots,t_{n-1},\tilde t_1,\tilde t_2,\cdots,\tilde t_{n-1}\}$ that
\be\label{24+1}
f_\ell^{(n-1)}(\hat x)=f_\ell(\hat x)+O(t^{\ell-n}),\quad\ell=1,2,\cdots,n-1.
\en
Then we have for $j=1,2,\cdots,n$ that
\be\label{24-7}
\left(f_j(\hat x),\ov{f_j(\hat x)}\right)^\top=\left(\begin{array}{cc}
 e^{itk(1-\hat x\cdot d)} & e^{itk(\hat x\cdot d-1)}\\
 e^{i(t+\tau)k(1-\hat x\cdot d)} & e^{i(t+\tau)k(\hat x\cdot d-1)}\\
 \end{array}\right)^{-1}\left(\begin{array}{c}
 F'_{j,H}\\
 \widetilde F'_{j,H}
 \end{array}\right)+O\left(\frac1{t^{n+1-j}}\right)
\en
as $t\to\infty$, where the functions $F'_{j,H}=F'_{j,H}(\hat x,t_1,t_2,\cdots,t_n)$ and $\widetilde F'_{j,H}=\widetilde F'_{j,H}(\hat x,\tilde t_1,\tilde t_2,\cdots,\tilde t_n)$ are given by
\be\label{24-5}
\left(F'_{1,H},F'_{2,H},...,F'_{n,H}\right)^\top=Q_1^{-1}\left(v^{(n)}(t_1\hat x,d),
v^{(n)}(t_2\hat x,d),...,v^{(n)}(t_n\hat x,d)\right)^\top,
\en
\be\label{24-6}
\left(\widetilde F'_{1,H},\widetilde F'_{2,H},...,\widetilde F'_{n,H}\right)^\top=Q_2^{-1}\left(v^{(n)}(\tilde t_1\hat x,d),
v^{(n)}(\tilde t_2\hat x,d),...,v^{(n)}(\tilde t_n\hat x,d)\right)^\top,
\en
and
\ben
v^{(n)}(x,d):=|x|^{1/2}[|u(x,d)|^2-1]-\sum_{j=1}^{n}\frac{\tilde h_j^{(n)}(\hat x)}{|x|^{j-1/2}}
\enn
with
\ben
\tilde h_j^{(n)}(\hat x)=\sum_{\ell=1}^{j}\tilde f_\ell^{(n)}(\hat x)\ov{\tilde f_{j-\ell+1}^{(n)}(\hat x)},\quad j=1,2,\cdots,n,
\enn
with $\tilde f_j^{(n)}(\hat x)$ given by
\be\label{24-3}
\left(\tilde f^{(n)}_j(\hat x),\ov{\tilde f^{(n)}_j(\hat x)}\right)^\top=\left(\begin{array}{cc}
 e^{itk(1-\hat x\cdot d)} & e^{itk(\hat x\cdot d-1)}\\
 e^{i(t+\tau)k(1-\hat x\cdot d)} & e^{i(t+\tau)k(\hat x\cdot d-1)}\\
 \end{array}\right)^{-1}\left(\begin{array}{c}
 F_{j,H}\\
 \widetilde F_{j,H}
 \end{array}\right)
\en
for $j=1,2,\cdots,n$, where the functions $F_{j,H}=F_{j,H}(\hat x,t_1,t_2,\cdots,t_n)$ and $\widetilde F_{j,H}=\widetilde F_{j,H}(\hat x,\tilde t_1,\tilde t_2,\allowbreak\cdots,\tilde t_n)$ are given by
\be\label{24-1}
\left(F_{1,H},F_{2,H},...,F_{n,H}\right)^\top=Q_1^{-1}\left(v^{(n-1)}(t_1\hat x,d),v^{(n-1)}(t_2\hat x,d),...,v^{(n-1)}(t_n\hat x,d)\right)^\top,
\en
\be\label{24-2}
\left(\widetilde F_{1,H},\widetilde F_{2,H},...,\widetilde F_{n,H}\right)^\top=Q_2^{-1}\left(v^{(n-1)}(\tilde t_1\hat x,d),v^{(n-1)}(\tilde t_2\hat x,d),...,v^{(n-1)}(\tilde t_n\hat x,d)\right)^\top,
\en
and
\ben
v^{(n-1)}(x,d):=|x|^{1/2}[|u(x,d)|^2-1]-\sum_{j=1}^{n-1}\frac{h_j^{(n-1)}(\hat x)}{|x|^{j-1/2}}
\enn
with
\ben
h_j^{(n-1)}(\hat x)=\sum_{\ell=1}^{j}f_\ell^{(n-1)}(\hat x)\ov{f_{j-\ell+1}^{(n-1)}(\hat x)},\quad j=1,2,\cdots,n-1.
\enn
\end{theorem}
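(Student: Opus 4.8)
The plan is to mimic the recursive bootstrapping already used in the proof of Theorem~\ref{4.2}, but now exploiting the \emph{revised} asymptotic identity from Theorem~\ref{thm22'}, namely that in two dimensions
\ben
|x|^{1/2}[|u(x,d)|^2-1]=2{\rm Re}[e^{ik|x|(1-\hat x\cdot d)}u_H^\infty(\hat x,d)]+\sum_{j=1}^{N}\frac{h_j(\hat x)}{|x|^{j-1/2}}+O\!\left(\frac1{|x|^{N+1/2}}\right),
\enn
where $h_j(\hat x)=\sum_{\ell=1}^{j}f_\ell(\hat x)\ov{f_{j-\ell+1}(\hat x)}$ comes from squaring the Atkinson expansion \eqref{AWH} of $u_H$ (the exponentially small $u_M$ contribution is absorbed into the remainder, as in Remark~\ref{20-1}(2) and the computation in the proof of Theorem~\ref{thm22'}). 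First I would record this expansion precisely, together with the two quadratic cross terms $2{\rm Re}[e^{ik|x|(1-\hat x\cdot d)}f_j(\hat x)]/|x|^{j-1/2}$ arising at each order $j\ge2$, so that the full $|x|^{-1/2-j}$-scale content of $|x|^{1/2}(|u|^2-1)$ is displayed.

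The induction is on $n$. The base case $n=1$ is Theorem~\ref{thm22}; the inductive hypothesis is exactly \eqref{24+1}, i.e.\ that from the $2(n-1)$ points $|x|=t_1,\dots,t_{n-1},\tilde t_1,\dots,\tilde t_{n-1}$ one has already produced $f_\ell^{(n-1)}(\hat x)=f_\ell(\hat x)+O(t^{\ell-n})$ for $\ell\le n-1$. The key step is then to \emph{subtract off the known part of the quadratic tail}: one forms $v^{(n-1)}(x,d)=|x|^{1/2}[|u(x,d)|^2-1]-\sum_{j=1}^{n-1}h_j^{(n-1)}(\hat x)/|x|^{j-1/2}$ using the already-computed $f_\ell^{(n-1)}$. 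Because $h_j^{(n-1)}=h_j+O(t^{-n})$ (each $f_\ell^{(n-1)}$ differs from $f_\ell$ by $O(t^{\ell-n})$, and the products telescope to $O(t^{-n})$ after the $|x|^{-(j-1/2)}$ weight), $v^{(n-1)}$ agrees with $2{\rm Re}[e^{ik|x|(1-\hat x\cdot d)}\sum_{j\ge1}f_j/|x|^{j-1/2}]$ up to $O(|x|^{-n-1/2})+O(t^{-n})$. Feeding the two $n$-tuples $(v^{(n-1)}(t_i\hat x,d))_i$ and $(v^{(n-1)}(\tilde t_i\hat x,d))_i$ through the inverse Vandermonde matrices $Q_1^{-1},Q_2^{-1}$ and then solving the $2\times2$ trigonometric system (invertible by the hypothesis $\tau\notin\{\ell\pi/(k(1-\hat x\cdot d))\}$, exactly as in Theorem~\ref{4.2}) yields the intermediate quantities $\tilde f_j^{(n)}(\hat x)$ in \eqref{24-3} with an $O(t^{n+1-j})^{-1}$-type error — here one uses that the $j$-th row of the Vandermonde inverse is $O(s^{j-1})$, as in Lemma~\ref{4.1}. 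One more pass — rebuilding $\tilde h_j^{(n)}$ from the $\tilde f_\ell^{(n)}$, subtracting to get $v^{(n)}$, and resolving the system to produce $F'_{j,H},\widetilde F'_{j,H}$ and finally $f_j(\hat x)$ via \eqref{24-7} — upgrades the accuracy to the claimed $O(1/t^{\,n+1-j})$. The reason the double pass (first $\tilde f^{(n)}$, then $f$) is needed, rather than a single one, is that the $n$-th order cross term $2{\rm Re}[e^{ik|x|(1-\hat x\cdot d)}f_n/|x|^{n-1/2}]$ is itself only recoverable to leading order on the first pass; once $\tilde f_n^{(n)}$ is known, the quadratic tail $\sum_j \tilde h_j^{(n)}/|x|^{j-1/2}$ can be subtracted to full order $n$, which is what pushes the error on $f_1$ down from $O(t^{-1/2})$ (Theorem~\ref{thm22}) to $O(t^{-n})$.

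The main obstacle is the careful bookkeeping of error propagation through the three nonlinear operations at each inductive step: (i) the $O(t^{-n})$ perturbation in $f_\ell^{(n-1)}$ must be shown to contaminate $v^{(n-1)}$ only at order $O((t|x|)^{-1/2}\cdot t^{-(n-1)})$-type, i.e.\ uniformly negligibly once $|x|\asymp t$; (ii) the application of $Q_1^{-1}$, whose $j$-th row grows like $t^{j-1}$, must be matched against the $O(|x|^{-n})=O(t^{-n})$ tail of the Atkinson remainder so that row $j$ contributes $O(t^{j-1-n})$, precisely the $O(t^{\ell-n})$ bound feeding the next induction level; and (iii) the $2\times2$ inversion, whose condition number is controlled by the fixed quantity $\sin(\tau k(1-\hat x\cdot d))\neq0$, must be verified to preserve these orders. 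Each of these is a routine but fussy Vandermonde/triangular-perturbation estimate of the same kind appearing in Lemma~\ref{4.1} and Theorem~\ref{4.2}; the novelty here is only that the nonlinear (quadratic-in-$f$) tail is peeled off recursively, so one must check that the $\tilde h_j^{(n)}$ and $h_j^{(n-1)}$ reconstructions do not reintroduce errors at an order that would break the telescoping. Once those orders are tracked, \eqref{24-7} follows for all $j=1,\dots,n$ by the same recursion-on-$j$ argument (starting from $j=1$ via the trigonometric inversion, then using the already-found $f_1,\dots,f_{j-1}$ to handle the $\sum_{\ell=1}^{j-1}f_\ell\ov{f_{j-\ell}}$ term) used at the end of the proof of Theorem~\ref{4.2}.
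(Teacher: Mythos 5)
Your proposal follows essentially the same route as the paper: expand $|x|^{1/2}[|u(x,d)|^2-1]$ into the oscillatory terms $F_j(x)/|x|^{j-1}$ plus the quadratic tail $\sum_j h_j(\hat x)/|x|^{j-1/2}$, peel off the tail using the inductively known $f_\ell^{(n-1)}$, invert the Vandermonde system and the $2\times2$ trigonometric system, and then run a second pass with the refined tail $\tilde h_j^{(n)}$ to reach the stated accuracy. The structure, including the reason the double pass is necessary, matches the paper's proof.

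One bookkeeping correction, since your stated orders are internally inconsistent: the first pass cannot already yield $\tilde f_j^{(n)}=f_j+O(1/t^{\,n+1-j})$ as you claim, or the second pass would be superfluous. Since $h_j^{(n-1)}-h_j=O(t^{j-n})$ (not $O(t^{-n})$), the weighted residue is $O(t^{1/2-n})$, and together with the uncancelled $h_n/|x|^{n-1/2}$ term this limits $v^{(n-1)}$ to $\sum_j F_j(x)/|x|^{j-1}+O(t^{1/2-n})$; after the Vandermonde and trigonometric inversions the first pass therefore gives only $\tilde f_j^{(n)}=f_j+O(t^{\,j-1/2-n})$. That half-power deficit is exactly what propagates into $\tilde h_j^{(n)}=h_j+O(t^{\,j-1/2-n})$, which after the $|x|^{-(j-1/2)}$ weighting makes $v^{(n)}$ accurate to $O(t^{-n})$ and yields the final $O(t^{\,j-1-n})$ error in \eqref{24-7}. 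Also, the oscillatory cross terms sit at scale $|x|^{-(j-1)}$, not $|x|^{-(j-1/2)}$ as written in your setup; only the quadratic tail lives at the half-integer scales. With these orders corrected, your argument is the paper's.
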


\begin{proof}
It follows from \eqref{2.3-2} and \eqref{AWH} that
\be\no
&&|x|^{1/2}[|u(x,d)|^2-1]=|x|^{1/2}[e^{-ikx\cdot d}u_H(x,d)+e^{ikx\cdot d}\ov{u_H(x,d)}+|u_H(x,d)|^2]+O(e^{-k|x|})\\\label{24-8}
&&\qquad\qquad\qquad\quad\;\;=\sum_{j=1}^{n}\frac{F_j(x)}{|x|^{j-1}}+\sum_{j=1}^n\frac{h_j(\hat x)}{|x|^{j-1/2}}+O\left(\frac{1}{|x|^n}\right),\quad|x|\to\infty,
\en
where
\ben
&&F_j(x)=e^{ik|x|(1-\hat x\cdot d)}f_j(\hat x)+e^{ik|x|(\hat x\cdot d-1)}\ov{f_j(\hat x)},\quad j=1,2,\cdots,n,\\
&&h_j(\hat x)=\sum_{\ell=1}^{j}f_\ell(\hat x)\ov{f_{j-\ell+1}(\hat x)},\quad j=1,2,\cdots,n.
\enn
It follows from \eqref{24+1} that $h_j^{(n-1)}(\hat x)=h_j(\hat x)+O(t^{j-n})$ as $t\to\infty$ for $j=1,2,\cdots,n-1$.
Therefore,
\ben
&&v^{(n-1)}(x,d)=\sum_{j=1}^{n}\frac{F_j(x)}{|x|^{j-1}}+\sum_{j=1}^{n-1}\frac{h_j(\hat x)-h_j^{(n-1)}(\hat x)}{|x|^{j-1/2}}+\frac{h_n(\hat x)}{|x|^{n-1/2}}+O\left(\frac1{|x|^{n}}\right)\\
&&\qquad\qquad\quad\;=\sum_{j=1}^{n}\frac{F_j(x)}{|x|^{j-1}}+O\left(\frac1{|x|^{n-1/2}}\right).
\enn
Due to the assumption on $t_j$ and $\tilde t_j(j=1,2,\cdots,n)$, it holds for sufficiently large $t$ that
\be\label{24-9}
F_\jmath(t_j\hat x)=F_\jmath(t_1\hat x),\quad F_\jmath(\tilde t_j\hat x)=F_\jmath(\tilde t_1\hat x),\quad j,\jmath=1,2,\cdots,n.
\en
Analogous to \eqref{7-1}, we deduce from \eqref{24-1} and \eqref{24-2} that
\ben
F_j(t_1\hat x)=F_{j,H}(\hat x,t_1,t_2,\cdots,t_n)+O(t^{j-1/2-n}),\\
F_j(t_1\hat x)=\widetilde F_{j,H}(\hat x,\tilde t_1,\tilde t_2,\cdots,\tilde t_n)+O(t^{j-1/2-n})
\enn
as $t\to\infty$.
It follows from the assumptions on $\tau$ and \eqref{24-3} that
\ben
f_j^{(n)}(\hat x)=\tilde f_j^{(n)}(\hat x)+(t^{j-1/2-n}),\quad j=1,2,\cdots,n.
\enn
Therefore, $h_j(\hat x)=\tilde h_j^{(n)}(\hat x)+O(t^{j-1/2-n})$ for $j=1,2,\cdots,n$.
Now it follows from \eqref{24-8} that
\ben
v^{(n)}(x,d)=\sum_{j=1}^{n}\frac{F_j(x)}{|x|^{j-1}}+\sum_{j=1}^{n}\frac{h_j(\hat x)-\tilde h_j^{(n)}(\hat x)}{|x|^{j-1/2}}+O\left(\frac1{|x|^{n}}\right)=\sum_{j=1}^{n}\frac{F_j(x)}{|x|^{j-1}}+O\left(\frac1{|x|^{n}}\right).
\enn
Analogous to \eqref{7-1}, we deduce from \eqref{24-9}, \eqref{24-5} and \eqref{24-6} that
\ben
F_j(t_1\hat x)=F'_{j,H}(\hat x,t_1,t_2,\cdots,t_n)+O(t^{j-1-n}),\\
F_j(t_1\hat x)=\widetilde F'_{j,H}(\hat x,\tilde t_1,\tilde t_2,\cdots,\tilde t_n)+O(t^{j-1-n})
\enn
as $t\to\infty$.
Now \eqref{24-7} follows from the assumptions on $\tau$.
\end{proof}

Note that \eqref{24+1} with $n=2$ can be obtained by the phase retrieval formula in Theorem \ref{thm22'}.
By recursion, we obtain the multipoint phase retrieval formula of $u_H^\infty$ for any $n\in\Z_+$. In view of \eqref{-2'}, $\{u_H(x):x\in\R^m\ba\ov{\Om}\}$ can be approximately calculated from the knowledge of $u_H^\infty(\hat x)=f_1(\hat x)$ at several points of $\hat x$ on $\Sp^{m-1}$.
In the sequel, we will derive phase retrieval formulas of $u_M^\infty$ under the assumption that $\{u_H(x):x\in\R^m\ba\ov{\Om}\}$ has been recovered.

Analogously to Theorem \ref{4.3}, we have the following theorem.

\begin{theorem}[Two-points phase retrieval formula of $u_M^\infty$ when $m=2$]
For an arbitrarily fixed $\hat x\in\Sp^2$ such that $\hat x\cdot d\neq0$ we have
\be\label{29-9}
\left(u_M^\infty(\hat x,d),\ov{u_M^\infty(\hat x,d)}\right)^\top=\left(\begin{array}{cc}
e^{-ikt\hat x\cdot d} & e^{ikt\hat x\cdot d}\\
e^{-ik(t+\tau)\hat x\cdot d} & e^{ik(t+\tau)\hat x\cdot d}
\end{array}\right)^{-1}\left(\begin{array}{c}
v(t\hat x,d)\\
v((t+\tau)\hat x,d)
\end{array}\right)+O\left(\frac{1}{t^{1/2}}\right)
\en
as $t\to\infty$, where the real constant $\tau\notin\left\{\frac{\ell\pi}{k\hat x\cdot d}:\ell\in\Z\right\}$ and $v(x,d):=|x|^{1/2}\{|u(x,d)|^2-|e^{ikx\cdot d}+u_H(x,d)|^2\}$.
\end{theorem}

Since $|x|^{1/2}e^{k|x|}|u_M|^2$ decays exponentially as $|x|\to\infty$, we have the following multipoint phase retrieval formula of $u_M^\infty$, which is analogous to Theorem \ref{4.5}.
The proof is omitted since, except for minor adjustments, it coincides with that for Theorem \ref{4.5}.
\begin{theorem}[Multipoint phase retrieval formula of $u_M^\infty$ when $m=2$]
Assume that $\delta,\sigma_1,\sigma_2,\cdots,\sigma_n$ are distinct positive constants.
For an arbitrarily fixed $\hat x\in\Sp^2$ such that $\hat x\cdot d\neq0$ we have
\be\label{27-1}
\left(g_j(\hat x),\ov{g_j(\hat x)}\right)^\top=\left(\begin{array}{cc}
e^{-ik\sigma_1t\hat x\cdot d} & e^{ik\sigma_1t\hat x\cdot d}\\
e^{-ik(\sigma_1t+\tau)\hat x\cdot d} & e^{ik(\sigma_1t+\tau)\hat x\cdot d}
\end{array}\right)^{-1}\left(\begin{array}{c}
G_{j,M}\\
\widetilde G_{j,M}
\end{array}\right)+O\left(\frac1{t^{n+1-j}}\right)
\en
as $t\to\infty$, $j=1,2,\cdots,n$, with the real constant $\tau\notin\{\frac{\ell\pi}{k\hat x\cdot d}:\ell\in\Z\}$.
Here, the functions $G_{j,M}=G_{j,M}(\hat x,t_1,t_2,\cdots,t_n)$ and $\widetilde G_{j,M}=\widetilde G_{j,M}(\hat x,\tilde t_1,\tilde t_2,\cdots,\tilde t_n)$ are given by
\be\label{27-2}
\left(\begin{array}{c}
G_{1,M}\\
G_{2,M}\\
\vdots\\
G_{n,M}
\end{array}\right)=\left(\begin{array}{cccc}
1 & \frac{1}{t_{1}} & \cdots & \frac{1}{t_{1}^{n-1}}\\
1 & \frac{1}{t_{2}} & \cdots & \frac{1}{t_{2}^{n-1}}\\
\vdots & \vdots & \ddots & \vdots  \\
1 & \frac{1}{t_{n}} & \cdots & \frac{1}{t_{n}^{n-1}}\\
\end{array}\right)^{-1}\left(\begin{array}{c}
w(t_1\hat x,d)\\
w(t_2\hat x,d)\\
\vdots\\
w(t_n\hat x,d)\\
\end{array}\right),\\\label{27-3}
\left(\begin{array}{c}
\widetilde G_{1,M}\\
\widetilde G_{2,M}\\
\vdots\\
\widetilde G_{n,M}
\end{array}\right)=\left(\begin{array}{cccc}
1 & \frac{1}{\tilde t_{1}} & \cdots & \frac{1}{\tilde t_{1}^{n-1}}\\
1 & \frac{1}{\tilde t_{2}} & \cdots & \frac{1}{\tilde t_{2}^{n-1}}\\
\vdots & \vdots & \ddots & \vdots \\
1 & \frac{1}{\tilde t_{n}} & \cdots & \frac{1}{\tilde t_{n}^{n-1}}\\
\end{array}\right)^{-1}\left(\begin{array}{c}
w(\tilde t_1\hat x,d)\\
w(\tilde t_2\hat x,d)\\
\vdots\\
w(\tilde t_n\hat x,d)\\
\end{array}\right),
\en
respectively, where $w(x,d):=|x|^{1/2}e^{k|x|}\{|u(x,d)|^2-|e^{ikx\cdot d}+u_H(x,d)|^2\}/|e^{ikx\cdot d}+u_H(x,d)|$.
Here, $t_j=\sigma_jt+b_j$ with $b_1\in(-\delta,\delta)$ and $b_j\in\left(-\delta,\frac{2\pi}{k\hat x\cdot d}+\delta\right)$ for $j\geq2$ such that $\arg\{e^{ikt_j\hat x\cdot d}+u_H(t_j\hat x,d)\}=k\sigma_1t\hat x\cdot d$ for each sufficiently large $t$ and $j\in\{1,2,\cdots,n\}$, and $\tilde t_j=\sigma_jt+\tilde b_j$ with $\tilde b_1\in(\tau-\delta,\tau+\delta)$ and $\tilde b_j\in\left(-\delta,\frac{2\pi}{k\hat x\cdot d}+\delta\right)$ for $j\geq2$ such that $\arg\{e^{ik\tilde t_j\hat x\cdot d}+u_H(\tilde t_j\hat x,d)\}=k(\sigma_1t+\tau)\hat x\cdot d$ for each sufficiently large $t$ and $j\in\{1,2,\cdots,n\}$.
\end{theorem}


\begin{remark}
(1) \eqref{27-1} is actually a special form of \eqref{29-9} for $n=1$.

(2) The approximation error of $u_H$ will be amplified by the factors $|x|^{1/2}e^{k|x|}$ in \eqref{29-9}, \eqref{27-2} and \eqref{27-3} as $|x|\to\infty$.
\end{remark}

In view of \eqref{-5'}, $\{u_M(x):x\in\R^m\ba\ov{\Om}\}$ can be approximately calculated from the knowledge of $u_M^\infty(\hat x)=f_1(\hat x)$ at several points of $\hat x$ on $\Sp^{m-1}$.

\section{Conclusion}\label{s5}
\setcounter{equation}{0}

In this paper, we have proved that the phased biharmonic wave (both Helmholtz and modified Helmholtz wave component) can be uniquely determined by the modulus of the total field in a nonempty domain.
Consequently, we have established the uniqueness result for inverse scattering problem of biharmonic waves with phaseless total-field data. Moreover, we have proposed the explicit formulas to find $u$ from the values of $u$ at some points as well as the explicit formulas for phase recovering, that is, finding $u$ from the values of $|u|$ at some points. Our formulas for phase recovering from phaseless data of biharmonic waves at a fixed frequency
do not require a priori knowledge of the boundary condition of the obstacle.
It is worth mentioning that the results presented in this work are useful since the phase information may be difficult to obtain in many practical situations.

\section*{Acknowledgements}

The work of Xiaoxu Xu is partially supported by National Natural Science Foundation of China grant 12201489, Shaanxi Fundamental Science Research Project for Mathematics and Physics (Grant No.23JSQ025), Young Talent Fund of Association for Science and Technology in Shaanxi, China (Grant No.20240504), the Young Talent Support Plan of Xi'an Jiaotong University, the Fundamental Research Funds for the Central Universities grant xzy012022009.


\end{document}